\newtheorem{theorem}{Theorem}
\newtheorem{proposition}[theorem]{Proposition}
\newtheorem{lemma}[theorem]{Lemma}
\newtheorem{corollary}[theorem]{Corollary}
\newtheorem{conjecture}{Conjecture}
\pgfplotsset{ every non boxed x axis/.append style={x axis line style=<->},
     every non boxed y axis/.append style={y axis line style=<->}, every axis/.append style={font=\tiny}}
\definecolor{newpurple}{RGB}{195, 22, 140}
\definecolor{newgray}{RGB}{240, 240, 240}
\definecolor{newlightblue}{RGB}{0, 175, 158}
\definecolor{newblue}{RGB}{47, 50, 145}
\definecolor{newyellow}{RGB}{232, 222, 0}
\definecolor{newgreen}{RGB}{0, 155, 1}
\newcommand{\cS}{\mathcal{S}}
\newcommand{\bd}{\mathbf{d}}
\newcommand{\bq}{\mathbf{q}}
\newcommand{\bP}{\mathbf{P}}
\newcommand{\rR}{\mathrm{R}}
\newcommand{\rT}{\mathrm{T}}%{\tau}%
\newcommand{\rL}{\mathrm{L}}%{\lambda}%
\newcommand{\rS}{\mathrm{S}}%{\sigma}%
\newcommand{\rA}{\mathrm{A}}%{\alpha}%
\newcommand{\Mod}{\,\,\mathrm{mod}\,\,}
\newcommand{\diagdots}[3][-25]{%
  \rotatebox{#1}{\makebox[0pt]{\makebox[#2]{\xleaders\hbox{$\cdot$\hskip#3}\hfill\kern0pt}}}%
}
\begin{document}
\title{The prime-power map}
\author{\small Steven}
\address{\normalfont\small Business Mathematics Department, School of Applied STEM, Universitas Prasetiya Mulya, South Tangerang 15339, Indonesia}
\author{\small Jonathan Hoseana}
\address{\normalfont\small Department of Mathematics, Parahyangan Catholic University, Bandung 40141, Indonesia}
\email{steven@pmbs.ac.id\textnormal{, }j.hoseana@unpar.ac.id}
\date{}

\begin{abstract}
We introduce a modification of Pillai's prime map \cite{Pillai,Luca}: the prime-power map. This map fixes $1$, divides its argument by $p$ if it is a prime-power $p^k$, otherwise subtracts from its argument the largest prime-power not exceeding it. We study the iteration of this map over the positive integers, developing, firstly, results parallel to those known for the prime map. Subsequently, we compare its dynamical properties to those of a more manageable variant of the map under which any orbit admits an explicit description. Finally, we present some experimental observations, based on which we conjecture that almost every orbit of the prime-power map contains no prime-power.
\end{abstract}

\maketitle
%-------------------------------------------------------------------
\section{Introduction}\label{section:Introduction}

Let $\mathbb{P}$ be the set of primes. In 1930, Pillai \cite{Pillai} introduced the \textbf{prime map} $x\mapsto x-\mathbf{p}(x)$, where $\mathbf{p}(x)$ is the largest element of $\mathbb{P}\cup\{0,1\}$ not exceeding $x$, under whose iteration every positive-integer initial condition is eventually fixed at $0$, the only fixed point of the map. Interesting results on the asymptotic behaviour of the time steps $\rR(x)$ it takes for an initial condition $x$ to reach the fixed point were established, before subsequently improved by Luca and Thangadurai \cite{Luca} in 2009. The latter authors proved that $\rR(x)$ grows no faster than $\ln\ln x$ \cite[Theorem 1.1]{Luca}, and that for every $k\in\mathbb{N}$, the proportion of initial conditions $t\leqslant x$ for which $\rR(t)=k$ is asymptotic to $\left(\ln_k x\right)^{-1}$, where the subscript $k$ denotes $k$-fold self-composition \cite[Theorem 1.2]{Luca}.

In this paper we study a modification of the prime map, constructed essentially by letting the role of primes be taken over by prime-powers. A \textbf{prime-power}, as is well-known, is an integer of the form $p^k$, where $p\in\mathbb{P}$ and $k\in\mathbb{N}$. Letting $\mathbb{P}^{\ast}$ be the set of prime-powers, we define the \textbf{prime-power map} $\bP:\mathbb{N}\to\mathbb{N}$ by
$$\bP(x):=\left\{\begin{array}{ll}
1&\text{if }x=1,\\
x-\bq(x)&\text{if }x\notin\mathbb{P}^\ast,\\
\frac{x}{p}&\text{if }x\in\mathbb{P}^\ast\text{ is a power }p\in\mathbb{P},\\
\end{array}\right.$$
where $\bq(x)$ is the largest element of $\mathbb{P}^\ast_{1}:=\mathbb{P}^\ast\cup\{1\}$ not exceeding $x$ (see Figure \ref{fig:plotP}). By the \textbf{orbit} of an \textbf{initial condition} $x\in\mathbb{N}$ under $\bP$ we mean the sequence $\left(x_n\right)_{n=1}^\infty$ where
$$x_1=x\qquad\quad\text{and}\quad\qquad x_{n+1}=\bP\left(x_n\right)\quad\text{for every }n\in\mathbb{N}.$$
Clearly, every orbit under $\bP$ is monotonically non-increasing and eventually reaches the sole fixed point $1$ at which it then stabilises (becomes constant).

In the subdomain $\mathbb{P}^\ast_1\subseteq\mathbb{N}$ the dynamics of $\bP$ is predictable; it consists only of divisions by primes and stabilisation. In its complement, by contrast, the dynamics is non-trivial and resembles that of Pillai's prime map. Such hybrid dynamical behaviour naturally motivates associating to every initial condition $x$ the time steps
$$\rS(x):=\min\left\{n\in\mathbb{N}:x_n\in\mathbb{P}^\ast_1\right\}\qquad\text{and}\qquad \rT(x):=\min\left\{n\in\mathbb{N}:x_n=1\right\}$$
at which the orbit of $x$ enters the predictable subdomain $\mathbb{P}^\ast_1$, and at which it reaches $1$, respectively. These will be referred to as the \textbf{settling time} and \textbf{transient length} of $x$, respectively. The unboundedness of the transient length function is immediate ---the function diverges along the sequence of powers of any given prime--- whereas that of the settling time function is less obvious and will be established in this paper. We shall also be interested in the \textbf{attractor}
$$\rA(x):=x_{\rS(x)}$$
of $x$, i.e., the element of the predictable subdomain first visited by the orbit of $x$.

%%%%%%%%%%%%%%%%%%%%%%%%%%%%%%%%%%%%%%%%%%%%%%%%%%%%%%%% FIGURE
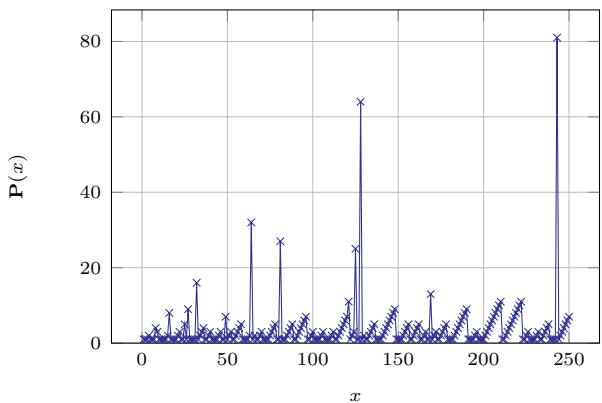
\begin{figure}[h!]
\centering
\begin{tikzpicture}
\begin{axis}[
	xmin=-17.85714286,%xmin=0,
	xmax=267.8571429,%xmax=266.6666667,
	ymin=0,
	ymax=88.36363636,
    %xtick={50,100,150,200,250},
    %ytick={20,40,60,80},
	%axis lines=middle,
    %x axis line style=->,
    %y axis line style=->,
	samples=100,
	xlabel=$x$,
	ylabel=$\bP(x)$,
	width=8cm,
    height=6cm,
    clip=false,
    grid=major,clip marker paths=true
]
\addplot[color=newblue,mark=x] plot coordinates {(1, 1) (2, 1) (3, 1) (4, 2) (5, 1) (6, 1) (7, 1) (8, 4) (9, 3) (10, 1) (11, 1) (12, 1) (13, 1) (14, 1) (15, 2) (16, 8) (17, 1) (18, 1) (19, 1) (20, 1) (21, 2) (22, 3) (23, 1) (24, 1) (25, 5) (26, 1) (27, 9) (28, 1) (29, 1) (30, 1) (31, 1) (32, 16) (33, 1) (34, 2) (35, 3) (36, 4) (37, 1) (38, 1) (39, 2) (40, 3) (41, 1) (42, 1) (43, 1) (44, 1) (45, 2) (46, 3) (47, 1) (48, 1) (49, 7) (50, 1) (51, 2) (52, 3) (53, 1) (54, 1) (55, 2) (56, 3) (57, 4) (58, 5) (59, 1) (60, 1) (61, 1) (62, 1) (63, 2) (64, 32) (65, 1) (66, 2) (67, 1) (68, 1) (69, 2) (70, 3) (71, 1) (72, 1) (73, 1) (74, 1) (75, 2) (76, 3) (77, 4) (78, 5) (79, 1) (80, 1) (81, 27) (82, 1) (83, 1) (84, 1) (85, 2) (86, 3) (87, 4) (88, 5) (89, 1) (90, 1) (91, 2) (92, 3) (93, 4) (94, 5) (95, 6) (96, 7) (97, 1) (98, 1) (99, 2) (100, 3) (101, 1) (102, 1) (103, 1) (104, 1) (105, 2) (106, 3) (107, 1) (108, 1) (109, 1) (110, 1) (111, 2) (112, 3) (113, 1) (114, 1) (115, 2) (116, 3) (117, 4) (118, 5) (119, 6) (120, 7) (121, 11) (122, 1) (123, 2) (124, 3) (125, 25) (126, 1) (127, 1) (128, 64) (129, 1) (130, 2) (131, 1) (132, 1) (133, 2) (134, 3) (135, 4) (136, 5) (137, 1) (138, 1) (139, 1) (140, 1) (141, 2) (142, 3) (143, 4) (144, 5) (145, 6) (146, 7) (147, 8) (148, 9) (149, 1) (150, 1) (151, 1) (152, 1) (153, 2) (154, 3) (155, 4) (156, 5) (157, 1) (158, 1) (159, 2) (160, 3) (161, 4) (162, 5) (163, 1) (164, 1) (165, 2) (166, 3) (167, 1) (168, 1) (169, 13) (170, 1) (171, 2) (172, 3) (173, 1) (174, 1) (175, 2) (176, 3) (177, 4) (178, 5) (179, 1) (180, 1) (181, 1) (182, 1) (183, 2) (184, 3) (185, 4) (186, 5) (187, 6) (188, 7) (189, 8) (190, 9) (191, 1) (192, 1) (193, 1) (194, 1) (195, 2) (196, 3) (197, 1) (198, 1) (199, 1) (200, 1) (201, 2) (202, 3) (203, 4) (204, 5) (205, 6) (206, 7) (207, 8) (208, 9) (209, 10) (210, 11) (211, 1) (212, 1) (213, 2) (214, 3) (215, 4) (216, 5) (217, 6) (218, 7) (219, 8) (220, 9) (221, 10) (222, 11) (223, 1) (224, 1) (225, 2) (226, 3) (227, 1) (228, 1) (229, 1) (230, 1) (231, 2) (232, 3) (233, 1) (234, 1) (235, 2) (236, 3) (237, 4) (238, 5) (239, 1) (240, 1) (241, 1) (242, 1) (243, 81) (244, 1) (245, 2) (246, 3) (247, 4) (248, 5) (249, 6) (250, 7)};
%\foreach \i [count=\urutan from 1] in {1, 1, 1, 2, 1, 1, 1, 4, 3, 1, 1, 1, 1, 1, 2, 8, 1, 1, 1, 1, 2, 3, 1, 1, 5, 1, 9, 1, 1, 1, 1, 16, 1, 2, 3, 4, 1, 1, 2, 3, 1, 1, 1, 1, 2, 3, 1, 1, 7, 1, 2, 3, 1, 1, 2, 3, 4, 5, 1, 1, 1, 1, 2, 32, 1, 2, 1, 1, 2, 3, 1, 1, 1, 1, 2, 3, 4, 5, 1, 1, 27, 1, 1, 1, 2, 3, 4, 5, 1, 1, 2, 3, 4, 5, 6, 7, 1, 1, 2, 3, 1, 1, 1, 1, 2, 3, 1, 1, 1, 1, 2, 3, 1, 1, 2, 3, 4, 5, 6, 7, 11, 1, 2, 3, 25, 1, 1, 64, 1, 2, 1, 1, 2, 3, 4, 5, 1, 1, 1, 1, 2, 3, 4, 5, 6, 7, 8, 9, 1, 1, 1, 1, 2, 3, 4, 5, 1, 1, 2, 3, 4, 5, 1, 1, 2, 3, 1, 1, 13, 1, 2, 3, 1, 1, 2, 3, 4, 5, 1, 1, 1, 1, 2, 3, 4, 5, 6, 7, 8, 9, 1, 1, 1, 1, 2, 3, 1, 1, 1, 1, 2, 3, 4, 5, 6, 7, 8, 9, 10, 11, 1, 1, 2, 3, 4, 5, 6, 7, 8, 9, 10, 11, 1, 1, 2, 3, 1, 1, 1, 1, 2, 3, 1, 1, 2, 3, 4, 5, 1, 1, 1, 1, 81, 1, 2, 3, 4, 5, 6, 7}{\begingroup\edef\temp{\endgroup\noexpand\draw[newblue] (axis cs:\urutan,\i) circle (1.5pt);}\temp}
\end{axis}
\end{tikzpicture}
\caption{\label{fig:plotP}\small
The values of $\bP(x)$ for $x\in\{1,\ldots,250\}$.}
\end{figure}
%%%%%%%%%%%%%%%%%%%%%%%%%%%%%%%%%%%%%%%%%%%%%%%%%%%%%%%%

The functions $\rT$ and $\rS$ defined above constitute our primary object of study. Our main results, along with the structure of this paper, will now be described. Following this introduction is Section \ref{sec:TandS}, where we establish \textit{a logarithmic upper bound for $\rT$} (Lemma \ref{lemma:Bertrand2} and Theorem \ref{thm:power2}). Subsequently, improved versions of Lemma \ref{lemma:Bertrand2}, derived from improved versions of Bertrand's Postulate currently known in the literature, will be discussed (Propositions \ref{prop:epsilon} and \ref{prop:Bertrandrefinement}) before we turn our attention to the function $\rS$, establishing \textit{its unboundedness} (Theorem \ref{thm:unboundednessS}) and \textit{a double-logarithmic upper bound} (Theorem \ref{thm:orderS(x)}); these are analogues of the results in \cite[pages 160-161]{Pillai} and \cite[Theorem 1.1]{Luca}, respectively. Next, we will study the densities of some subsets of $\mathbb{N}$ induced by $\bP$ and prove that  \textit{almost every positive integer is coprime with its image under $\bP$} (Theorem \ref{thm:coprime}). At the end of the section, we connect the map $\bP$ to the representations of integers as sums of prime-powers; of note is the discussion on the smallest initial condition having a given settling time (Proposition \ref{prop:xis}).

The detailed behaviour of the functions $\rT$ and $\rS$ is difficult to explicitise. For a comparison, in Section \ref{sec:Pp} we introduce a more manageable variant of $\bP$: a family of maps $\bP_p$, under which the analogous functions $\rT_p$ and $\rS_p$ admit explicit descriptions (Theorem \ref{thm:Fr}). We prove that these two functions are asymptotic (Corollary \ref{corollary}), which, together with some experimental observations, suggests that the original functions $\rT$ and $\rS$ exhibit an analogous asymptoticity. Motivated by these observations, we conjecture that \textit{almost every orbit of the prime-power map contains no prime-power} (Conjecture \ref{finalconjecture}).\bigskip

\section{Results on the map $\bP$}\label{sec:TandS}

Before entering into our main results, let us state some fundamentals. First, we state the famous \textbf{Bertrand's Postulate} \cite[Theorem 418]{HardyWright} which will be used frequently.\bigskip

\begin{lemma}[Bertrand's Postulate]\label{lemma:Bertrand}
	 For every integer $n\geqslant 2$, there exists a prime $p$ such that $n <p< 2n$.
\end{lemma}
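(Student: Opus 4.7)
The plan is to reproduce Erd\H{o}s's elementary proof, which rests on comparing a lower bound for the central binomial coefficient $\binom{2n}{n}$ with an upper bound obtained from its prime factorisation under the hypothesis that no prime lies in $(n,2n)$.

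First I would derive the lower bound $\binom{2n}{n}\geqslant \frac{4^n}{2n+1}$, which follows at once from the fact that $\binom{2n}{n}$ is the largest of the $2n+1$ terms in the expansion of $(1+1)^{2n}=4^n$. Next, invoking Legendre's formula, I would observe that for every prime $p$ the exponent $v_p\!\left(\binom{2n}{n}\right)=\sum_{i\geqslant 1}\bigl(\lfloor 2n/p^i\rfloor-2\lfloor n/p^i\rfloor\bigr)$ contains only terms equal to $0$ or $1$ and vanishes for $p^i>2n$, so that $p^{v_p(\binom{2n}{n})}\leqslant 2n$. Two further observations are essential: for $\sqrt{2n}<p$ we have $v_p(\binom{2n}{n})\leqslant 1$, and for $\frac{2n}{3}<p\leqslant n$ with $n\geqslant 3$ we have $v_p(\binom{2n}{n})=0$, since both $\lfloor 2n/p\rfloor=2$ and $2\lfloor n/p\rfloor=2$ cancel (and higher $i$ contribute nothing). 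I would also need the Chebyshev-style estimate $\prod_{p\leqslant m}p\leqslant 4^{m-1}$, provable by induction through the remark that every prime in $(m+1,2m+1]$ divides $\binom{2m+1}{m}\leqslant 4^m$.

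Assuming for contradiction that no prime lies strictly between $n$ and $2n$, the factorisation of $\binom{2n}{n}$ is supported on primes $p\leqslant \frac{2n}{3}$. Splitting this range at $\sqrt{2n}$ and combining the bounds above yields
$$\frac{4^n}{2n+1}\leqslant\binom{2n}{n}\leqslant (2n)^{\sqrt{2n}}\cdot 4^{2n/3},$$
from which a routine logarithmic comparison delivers a contradiction for all $n$ beyond some explicit threshold $n_0$. The main obstacle is precisely this final comparison: isolating the cleanest form of the resulting inequality and pinning down the threshold $n_0$ above which $4^{n/3}>(2n+1)(2n)^{\sqrt{2n}}$. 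For the finitely many remaining $n$ with $2\leqslant n<n_0$, I would finish by exhibiting a concrete increasing chain of primes $2,3,5,7,13,23,43,83,163,\ldots$ in which each term is less than twice its predecessor, so that every such $n$ sits between two consecutive primes in the chain, thereby producing the required prime in $(n,2n)$.
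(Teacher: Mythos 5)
The paper does not prove this lemma: it states Bertrand's Postulate as a known result, citing Hardy and Wright (Theorem 418), and uses it as a black box in everything that follows. Your sketch is a correct outline of Erd\H{o}s's elementary proof, which is essentially the argument in that cited source. The ingredients you list --- the lower bound $\binom{2n}{n}\geqslant 4^n/(2n+1)$, the Legendre/Kummer observation that $p^{v_p\left(\binom{2n}{n}\right)}\leqslant 2n$, the facts that $v_p\leqslant 1$ for $p>\sqrt{2n}$ and $v_p=0$ for $2n/3<p\leqslant n$ (with $n\geqslant 3$), the Chebyshev-type bound $\prod_{p\leqslant m}p\leqslant 4^{m-1}$, and the split of the prime factorisation at $\sqrt{2n}$ --- combine exactly as you describe: if $(n,2n)$ were prime-free, one would get $4^{n/3}\leqslant (2n+1)(2n)^{\sqrt{2n}}$, which fails for all $n$ beyond an explicit threshold, and the Bertrand chain $2,3,5,7,13,23,43,83,163,\ldots$ (extended far enough) disposes of the remaining small $n$. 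The two items you flag as obstacles, namely pinning down the threshold $n_0$ and carrying the chain past it, are routine but genuinely necessary bookkeeping in any complete write-up of this proof; they are not a flaw in the approach, just the part that cannot be hand-waved.
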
\bigskip

We will also use standard asymptotic notations which, to ensure clarity, will now be explained. Let $f(x)$ and $g(x)$ be positive for all sufficiently large values of $x\in\mathbb{N}$. We write $f(x)\sim g(x)$ to mean $f(x)$ \textbf{is asymptotic to} $g(x)$, i.e., $\frac{f(x)}{g(x)}\xrightarrow{x\to\infty}1$. In addition, $f(x)\ll g(x)$ means $f(x)$ \textbf{is asymptotically bounded above by} $g(x)$, i.e., there exist $C>0$ and $x_0\in\mathbb{N}$ such that $f(x)\leqslant C\,g(x)$ for every integer $x\geqslant x_0$. Finally, $f(x)=o(g(x))$ means $\frac{f(x)}{g(x)}\xrightarrow{x\to\infty}0$.

\subsection{The transient length function}

First, we have
$$\limsup_{x\to\infty}\rT(x)=\infty,$$
since $\rT$ diverges along, e.g., the sequence of powers of two. More precisely, if $x$ is a power of two, we have $\rT(x)= \log_2x +1$. Let us now prove that $\log_2x +1$ is an upper bound for $\rT$ (see Figure \ref{fig:transittime}) via the following lemma which exploits Bertrand's Postulate.\bigskip

\begin{lemma}\label{lemma:Bertrand2}
For every integer $x\geqslant 2$ we have $$\bP(x)\leqslant\frac{x}{2}.$$
\end{lemma}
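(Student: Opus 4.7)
The plan is to split into two cases according to the definition of $\bP$. If $x \in \mathbb{P}^\ast$, say $x = p^k$ with $p \in \mathbb{P}$, then $\bP(x) = x/p$, and since every prime satisfies $p \geqslant 2$, the bound $\bP(x) \leqslant x/2$ is immediate; this case requires no further work and in fact shows the bound is tight at $x = 2^k$.

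The substantive case is $x \notin \mathbb{P}^\ast$, which forces $x \geqslant 6$ since every integer in $\{2,3,4,5\}$ is a prime-power. Here $\bP(x) = x - \bq(x)$, so the inequality to be proved rewrites as $\bq(x) \geqslant x/2$. Because $\mathbb{P} \subseteq \mathbb{P}^\ast_1$, it suffices to locate a prime in the interval $(x/2, x]$, and this is precisely the content of Bertrand's Postulate (Lemma \ref{lemma:Bertrand}). Concretely, I would apply the lemma with $n = \lfloor x/2 \rfloor$, which is at least $3$ once $x \geqslant 6$, obtaining a prime $p$ with $\lfloor x/2 \rfloor < p < 2\lfloor x/2 \rfloor \leqslant x$; integrality of $p$ then upgrades the lower bound to $p > x/2$, yielding $\bq(x) \geqslant p > x/2$ and hence $\bP(x) < x/2$, as required.

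I do not anticipate any serious obstacle: the argument is essentially a one-line invocation of Bertrand's Postulate after disposing of the trivial prime-power case. The only point requiring mild care is verifying that $\lfloor x/2 \rfloor \geqslant 2$ throughout the non-prime-power case so that Lemma \ref{lemma:Bertrand} applies, which is automatic because the smallest non-prime-power exceeding $1$ is $6$.
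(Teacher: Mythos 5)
Your proposal is correct and follows exactly the same two-case decomposition as the paper (trivial bound for prime-powers via $p\geqslant 2$, Bertrand's Postulate for non-prime-powers). You are in fact somewhat more careful than the paper's proof: the paper simply asserts that Bertrand guarantees a prime between $x/2$ and $x$, whereas you instantiate the postulate with the integer $n=\lfloor x/2\rfloor$, check $n\geqslant 2$ (indeed $n\geqslant 3$ since the smallest non-prime-power greater than $1$ is $6$), and use integrality to pass from $p>\lfloor x/2\rfloor$ to $p>x/2$ -- a worthwhile bit of bookkeeping the paper glosses over.
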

\begin{proof}
Let $x\geqslant 2$. If $x=p^k$ for some $p\in\mathbb{P}$ and $k\in\mathbb{N}$, then $\bP(x)=\frac{x}{p}\leqslant\frac{x}{2}$. Otherwise, since Bertrand's Postulate guarantees the existence of a prime between $\frac{x}{2}$ and $x$, then $\bq(x)\geqslant\frac{x}{2}$, and hence $\bP(x)\leqslant\frac{x}{2}$.
\end{proof}\bigskip

\begin{theorem}\label{thm:power2}
	For every $x\in\mathbb{N}$ we have $\rT(x)\leqslant \log_2x +1$ with equality if and only if $x$ is a power of two.
\end{theorem}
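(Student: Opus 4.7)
The plan is to prove both the bound and the equality characterisation simultaneously by strong induction on $x$, with Lemma \ref{lemma:Bertrand2} doing essentially all the work. The base case $x=1$ is trivial: $\rT(1)=1=\log_2 1+1$ and $1=2^0$ is (vacuously) a power of two.

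For the inductive step, I would observe that for $x\geqslant 2$ the recursive identity $\rT(x)=1+\rT(\bP(x))$ holds directly from the definitions of $\rT$ and of an orbit. By Lemma \ref{lemma:Bertrand2}, $\bP(x)\leqslant x/2<x$, so the inductive hypothesis applies to $\bP(x)$ and yields $\rT(\bP(x))\leqslant\log_2\bP(x)+1\leqslant\log_2(x/2)+1=\log_2 x$. Adding $1$ gives the claimed bound.

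For the equality clause, the sufficiency direction is a direct computation: if $x=2^k$, then the orbit is $2^k,2^{k-1},\ldots,2,1$ and stabilises after exactly $k+1=\log_2 x+1$ steps. For necessity, the key observation is that the inequality $\bP(x)\leqslant x/2$ of Lemma \ref{lemma:Bertrand2} is actually \emph{strict} unless $x$ is a power of two. This refinement is read off from the lemma's proof: if $x=p^k$ with $p$ an odd prime, then $\bP(x)=x/p<x/2$; if $x\notin\mathbb{P}^\ast$, then Bertrand's Postulate furnishes a prime strictly between $x/2$ and $x$, whence $\bq(x)>x/2$ and thus $\bP(x)<x/2$. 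Equality $\bP(x)=x/2$ therefore forces $x$ to be a power of two. Combining this with the identity $\rT(x)=1+\rT(\bP(x))$, if $\rT(x)=\log_2 x+1$ then necessarily $\bP(x)=x/2$ \emph{and} $\rT(\bP(x))=\log_2\bP(x)+1$; the first condition forces $x=2^k$ for some $k\geqslant 1$, completing the induction.

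There is no real obstacle: the proof is a clean two-line induction once Lemma \ref{lemma:Bertrand2} is in hand. The only point requiring a moment of care is extracting the strict version of the lemma for the equality case, which is immediate from re-examining its proof but is worth stating explicitly (perhaps as a remark within the proof) rather than leaving implicit.
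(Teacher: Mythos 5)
Your proposal is correct, but the paper's proof handles the equality characterisation far more economically than yours does. For the inequality itself, your strong induction and the paper's telescoping chain $1 = x_{\rT(x)} \leqslant x_{\rT(x)-1}/2 \leqslant \cdots \leqslant x/2^{\rT(x)-1}$ are the same argument in different clothing, so no substantive difference there.

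Where you genuinely diverge is in the necessity direction of the equality claim. You develop a strict refinement of Lemma~\ref{lemma:Bertrand2} (that $\bP(x) < x/2$ unless $x$ is a power of two) and then argue that $\rT(x) = \log_2 x + 1$ forces $\bP(x) = x/2$, hence $x$ is a power of two. This works, and the strict refinement is a true and occasionally useful fact, but it is more machinery than the statement requires. The paper simply observes that $\rT(x) \in \mathbb{N}$, so if $\rT(x) = \log_2 x + 1$ then $\log_2 x = \rT(x) - 1$ is a non-negative integer and therefore $x = 2^{\rT(x)-1}$ is a power of two. No re-examination of the lemma, no discussion of when Bertrand's Postulate gives a strict gap, no worrying about the floor of $x/2$ for odd $x$. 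The moral is that the integrality of $\rT$ already pins down the equality case for free; the strict inequality you derive is correct but is answering a harder question than the one posed.
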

\begin{proof}
	By Lemma \ref{lemma:Bertrand2}, in the orbit $\left(x_n\right)_{n=1}^\infty$ of $x\in\mathbb{N}$ we have
$$1=x_{\rT(x)}\leqslant \frac{x_{\rT(x)-1}}{2}\leqslant\frac{x_{\rT(x)-2}}{2^2}\leqslant\cdots\leqslant\frac{x_1}{2^{\rT(x)-1}}=\frac{x}{2^{\rT(x)-1}},$$
which implies the desired inequality. If $x$ is a power of two, we have $\rT(x)=\log_2 x + 1$ as previously remarked. Conversely, if $\rT(x)=\log_2 x + 1$, then $x=2^{\rT(x)-1}$.
\end{proof}\bigskip

%%%%%%%%%%%%%%%%%%%%%%%%%%%%%%%%%%%%%%%%%%%%%%%%%%%%%%%% FIGURE
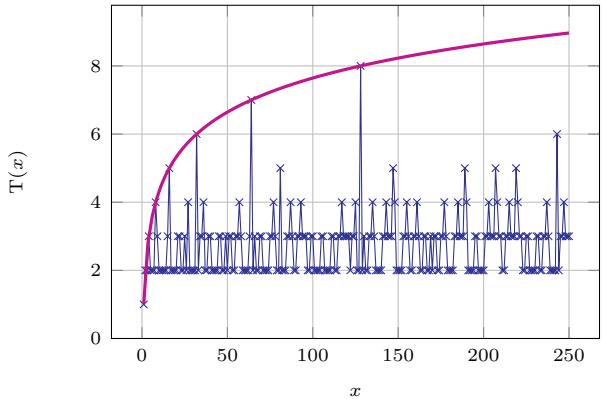
\begin{figure}
\centering
\begin{tikzpicture}
\begin{axis}[
	xmin=-17.85714286,%xmin=0,
	xmax=267.8571429,%xmax=266.6666667,
	ymin=0,
	ymax=9.780855583,
    %xtick={50,100,150,200,250},
    %ytick={1,2,3,4,5,6,7,8,9},
	%axis lines=middle,
    %x axis line style=->,
    %y axis line style=->,
	samples=100,
	xlabel=$x$,
	ylabel=$\rT(x)$,
	width=8cm,
    height=6cm,
    clip=false,
    grid=major,clip marker paths=true
]
%\draw[dashed] (axis cs:250,0) -- (axis cs:250,8.965784284);
\addplot[color=newblue,mark=x] plot coordinates {(1, 1) (2, 2) (3, 2) (4, 3) (5, 2) (6, 2) (7, 2) (8, 4) (9, 3) (10, 2) (11, 2) (12, 2) (13, 2) (14, 2) (15, 3) (16, 5) (17, 2) (18, 2) (19, 2) (20, 2) (21, 3) (22, 3) (23, 2) (24, 2) (25, 3) (26, 2) (27, 4) (28, 2) (29, 2) (30, 2) (31, 2) (32, 6) (33, 2) (34, 3) (35, 3) (36, 4) (37, 2) (38, 2) (39, 3) (40, 3) (41, 2) (42, 2) (43, 2) (44, 2) (45, 3) (46, 3) (47, 2) (48, 2) (49, 3) (50, 2) (51, 3) (52, 3) (53, 2) (54, 2) (55, 3) (56, 3) (57, 4) (58, 3) (59, 2) (60, 2) (61, 2) (62, 2) (63, 3) (64, 7) (65, 2) (66, 3) (67, 2) (68, 2) (69, 3) (70, 3) (71, 2) (72, 2) (73, 2) (74, 2) (75, 3) (76, 3) (77, 4) (78, 3) (79, 2) (80, 2) (81, 5) (82, 2) (83, 2) (84, 2) (85, 3) (86, 3) (87, 4) (88, 3) (89, 2) (90, 2) (91, 3) (92, 3) (93, 4) (94, 3) (95, 3) (96, 3) (97, 2) (98, 2) (99, 3) (100, 3) (101, 2) (102, 2) (103, 2) (104, 2) (105, 3) (106, 3) (107, 2) (108, 2) (109, 2) (110, 2) (111, 3) (112, 3) (113, 2) (114, 2) (115, 3) (116, 3) (117, 4) (118, 3) (119, 3) (120, 3) (121, 3) (122, 2) (123, 3) (124, 3) (125, 4) (126, 2) (127, 2) (128, 8) (129, 2) (130, 3) (131, 2) (132, 2) (133, 3) (134, 3) (135, 4) (136, 3) (137, 2) (138, 2) (139, 2) (140, 2) (141, 3) (142, 3) (143, 4) (144, 3) (145, 3) (146, 3) (147, 5) (148, 4) (149, 2) (150, 2) (151, 2) (152, 2) (153, 3) (154, 3) (155, 4) (156, 3) (157, 2) (158, 2) (159, 3) (160, 3) (161, 4) (162, 3) (163, 2) (164, 2) (165, 3) (166, 3) (167, 2) (168, 2) (169, 3) (170, 2) (171, 3) (172, 3) (173, 2) (174, 2) (175, 3) (176, 3) (177, 4) (178, 3) (179, 2) (180, 2) (181, 2) (182, 2) (183, 3) (184, 3) (185, 4) (186, 3) (187, 3) (188, 3) (189, 5) (190, 4) (191, 2) (192, 2) (193, 2) (194, 2) (195, 3) (196, 3) (197, 2) (198, 2) (199, 2) (200, 2) (201, 3) (202, 3) (203, 4) (204, 3) (205, 3) (206, 3) (207, 5) (208, 4) (209, 3) (210, 3) (211, 2) (212, 2) (213, 3) (214, 3) (215, 4) (216, 3) (217, 3) (218, 3) (219, 5) (220, 4) (221, 3) (222, 3) (223, 2) (224, 2) (225, 3) (226, 3) (227, 2) (228, 2) (229, 2) (230, 2) (231, 3) (232, 3) (233, 2) (234, 2) (235, 3) (236, 3) (237, 4) (238, 3) (239, 2) (240, 2) (241, 2) (242, 2) (243, 6) (244, 2) (245, 3) (246, 3) (247, 4) (248, 3) (249, 3) (250, 3)};
\addplot[domain=1:250,very thick,newpurple] {ln(x)/ln(2)+1};
%\foreach \i [count=\urutan from 1] in {1, 2, 2, 3, 2, 2, 2, 4, 3, 2, 2, 2, 2, 2, 3, 5, 2, 2, 2, 2, 3, 3, 2, 2, 3, 2, 4, 2, 2, 2, 2, 6, 2, 3, 3, 4, 2, 2, 3, 3, 2, 2, 2, 2, 3, 3, 2, 2, 3, 2, 3, 3, 2, 2, 3, 3, 4, 3, 2, 2, 2, 2, 3, 7, 2, 3, 2, 2, 3, 3, 2, 2, 2, 2, 3, 3, 4, 3, 2, 2, 5, 2, 2, 2, 3, 3, 4, 3, 2, 2, 3, 3, 4, 3, 3, 3, 2, 2, 3, 3, 2, 2, 2, 2, 3, 3, 2, 2, 2, 2, 3, 3, 2, 2, 3, 3, 4, 3, 3, 3, 3, 2, 3, 3, 4, 2, 2, 8, 2, 3, 2, 2, 3, 3, 4, 3, 2, 2, 2, 2, 3, 3, 4, 3, 3, 3, 5, 4, 2, 2, 2, 2, 3, 3, 4, 3, 2, 2, 3, 3, 4, 3, 2, 2, 3, 3, 2, 2, 3, 2, 3, 3, 2, 2, 3, 3, 4, 3, 2, 2, 2, 2, 3, 3, 4, 3, 3, 3, 5, 4, 2, 2, 2, 2, 3, 3, 2, 2, 2, 2, 3, 3, 4, 3, 3, 3, 5, 4, 3, 3, 2, 2, 3, 3, 4, 3, 3, 3, 5, 4, 3, 3, 2, 2, 3, 3, 2, 2, 2, 2, 3, 3, 2, 2, 3, 3, 4, 3, 2, 2, 2, 2, 6, 2, 3, 3, 4, 3, 3, 3}{\begingroup\edef\temp{\endgroup\noexpand\draw[newblue] (axis cs:\urutan,\i) circle (1.5pt);}\temp}
\end{axis}
\end{tikzpicture}
\caption{\label{fig:transittime}\small
The values of $\rT(x)$ for $x\in\{1,\ldots,250\}$ and the upper bound provided by Theorem \ref{thm:power2}.}
\end{figure}
%%%%%%%%%%%%%%%%%%%%%%%%%%%%%%%%%%%%%%%%%%%%%%%%%%%%%%%%

In the literature there have been numerous refinements of Bertrand's Postulate which lead to improved versions of Lemma \ref{lemma:Bertrand2}. For instance, the Prime Number Theorem implies that, given any $\varepsilon>0$, there exists $X_0\in\mathbb{N}$ such that for every integer $x\geqslant X_0$, the interval $\left[x-\varepsilon x,x\right]$ contains a prime number \cite[Section 22.19]{HardyWright}, implying that $\bP(x)\leqslant\varepsilon x$ if $x$ is not a prime-power. Therefore, we have the following proposition.\bigskip

\begin{proposition}\label{prop:epsilon}
For every $\varepsilon>0$ there exists $X_0\in\mathbb{N}$ such that
\begin{equation}\label{eq:epsilon}
\bP(x)\leqslant\varepsilon x
\end{equation}
for every non-prime-power $x\geqslant X_0$.
\end{proposition}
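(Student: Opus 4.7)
The plan is to directly invoke the Prime Number Theorem, which the preceding paragraph has already flagged as the essential input. The structure of the argument should be: extract from the PNT the statement that primes become dense in intervals of the form $[(1-\varepsilon)x, x]$, then translate this density statement into the required bound on $\bq(x)$ and hence on $\bP(x)$.

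First, I would observe that we may assume $\varepsilon \in (0,1)$, since for $\varepsilon \geqslant 1$ the inequality $\bP(x) \leqslant \varepsilon x$ holds trivially for every $x \in \mathbb{N}$ (as $\bP(x) \leqslant x$ always). Next, I would cite the consequence of the Prime Number Theorem recorded in Hardy and Wright's Section 22.19: for any $\varepsilon > 0$ there exists $X_0 \in \mathbb{N}$ such that whenever $x \geqslant X_0$, the interval $[x - \varepsilon x,\, x]$ contains at least one prime $p$.

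Finally, suppose $x \geqslant X_0$ is not a prime-power, and let $p \in [x - \varepsilon x, x]$ be such a prime. Since $p \in \mathbb{P}\subseteq\mathbb{P}^\ast_1$ and $p \leqslant x$, the definition of $\bq$ yields $\bq(x) \geqslant p \geqslant (1-\varepsilon)x$, so that
\[
\bP(x) = x - \bq(x) \leqslant x - (1-\varepsilon)x = \varepsilon x,
\]
as required.

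The main obstacle is essentially nonexistent here, as the Prime Number Theorem does all the work; the only care needed is to cite the correct quantitative version (namely, the one asserting $\pi(x)-\pi((1-\varepsilon)x) > 0$ for $x$ sufficiently large, equivalent to $p_{n+1}/p_n \to 1$) and to remember that the hypothesis that $x$ is not a prime-power is what activates the branch $\bP(x) = x - \bq(x)$ in the definition.
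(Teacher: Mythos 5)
Your proof is correct and follows exactly the same approach as the paper: invoke the Prime Number Theorem (via Hardy and Wright, Section 22.19) to obtain a prime in $[x-\varepsilon x, x]$, then deduce $\bq(x)\geqslant(1-\varepsilon)x$ and hence $\bP(x)\leqslant\varepsilon x$ for non-prime-powers. The paper presents this reasoning in the sentence preceding the proposition statement and omits a separate formal proof.
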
\bigskip

Notice that, since for every prime-power $x=p^k$ we have $\frac{\bP(x)}{x}=\frac{1}{p}\leqslant\frac{1}{2}$ with equality if and only if $p=2$, there is no $\varepsilon<\frac{1}{2}$ such that \eqref{eq:epsilon} holds for \textit{all} sufficiently large $x$. In this sense, the non-universality of \eqref{eq:epsilon} ---the fact that it holds only for non-prime-powers--- is unavoidable. However, for every $\varepsilon>0$ one can always choose a sufficiently large $X_1\in\mathbb{N}$ such that \eqref{eq:epsilon} holds for all powers of all primes $p\geqslant X_1$.

Hoheisel \cite{Hoheisel}, as cited in \cite[Section 2]{Luca}, proved the following development of Bertrand's Postulate.\bigskip

\begin{lemma}\label{lemma:Hoheisel}
There exist $\theta\in(0,1)$ and $X_0\in\mathbb{N}$ such that for every $x\geqslant X_0$ the interval $\left[x-x^\theta,x\right]$ contains a prime number.
\end{lemma}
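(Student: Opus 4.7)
The claim is Hoheisel's classical 1930 theorem on primes in short intervals, so the plan is to follow his original analytic route. The key reduction is to establish $\psi(x) - \psi\!\left(x - x^\theta\right) > 0$ for all large $x$, where $\psi(x) := \sum_{p^k \leqslant x}\ln p$ is the Chebyshev function; since the contribution of proper prime-powers to $\psi$ is $O\!\left(\sqrt{x}\ln x\right) = o\!\left(x^\theta\right)$, a positive increment forces the existence of a prime (not just a prime-power) in $\left[x-x^\theta, x\right]$.

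First I would invoke the truncated explicit formula for $\psi$ in terms of the non-trivial zeros $\rho = \beta + i\gamma$ of the Riemann zeta function, and subtract its instances at $x$ and $x-x^\theta$ to obtain
$$\psi(x) - \psi\!\left(x-x^\theta\right) = x^\theta - \sum_{|\gamma|\leqslant T}\frac{x^\rho - \left(x-x^\theta\right)^\rho}{\rho} + O\!\left(\frac{x\left(\ln xT\right)^2}{T}\right),$$
valid for any $T\geqslant 2$. The task becomes to dominate both error terms by the main term $x^\theta$ for a suitable truncation parameter $T$ (a small power of $x$) and some admissible $\theta\in(0,1)$.

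Next, I would control the sum over zeros using two analytic inputs on $\zeta$: the de la Vallée Poussin zero-free region $\beta\leqslant 1 - c/\ln(|\gamma|+2)$, and ---this is the decisive ingredient--- a zero-density estimate of the form $N(\sigma,T)\ll T^{A(1-\sigma)}(\ln T)^B$, where $N(\sigma,T)$ counts non-trivial zeros with $\beta\geqslant\sigma$ and $|\gamma|\leqslant T$. Bounding each term by $O\!\left(x^{\beta+\theta-1}/|\gamma|\right)$ via the mean value theorem and then applying partial summation against the density bound on dyadic $\sigma$-strips, the zero-sum contributes at most $x^\theta \cdot x^{-\eta}$ for some $\eta>0$, provided $\theta$ is chosen sufficiently close to $1$ in terms of $A$ and $c$.

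The crux, and the content of Hoheisel's original breakthrough, is the zero-density estimate: the zero-free region alone is insufficient, because zeros with real part near $1$ could in principle accumulate and spoil the bound. Hoheisel's value $\theta = 1 - 1/33000$ reflects the (weak) density estimate available to him; later refinements (Ingham, Huxley, Heath-Brown, Baker--Harman--Pintz) sharpen $A$ and hence the admissible $\theta$, but the architecture of the proof is unchanged.
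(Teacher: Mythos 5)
The paper does not prove this lemma at all: it is stated as a known result with a citation to Hoheisel's 1930 paper (via Luca--Thangadurai, Section~2), and the authors simply take it as a black box, just as they later take the Baker--Harman--Pintz refinement $\theta = 0.525$ as a black box. Your proposal therefore does something the paper does not attempt, namely sketch the actual analytic proof. The sketch you give is substantively correct and faithfully captures the architecture of Hoheisel's argument: the reduction from primes to the Chebyshev function $\psi$ (the prime-power correction being harmless since $\theta > 1/2$), the truncated explicit formula with its $O\!\left(x(\ln xT)^2/T\right)$ error, the estimate $x^\rho - (x-x^\theta)^\rho = O\!\left(x^{\beta+\theta-1}\right)$ from the mean value theorem, and crucially the pairing of a zero-free region with a zero-density estimate $N(\sigma,T)\ll T^{A(1-\sigma)}(\ln T)^B$. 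You are also right that the zero-density theorem, not the zero-free region, is the genuinely new ingredient in Hoheisel's work, and that the subsequent improvements (Ingham, Huxley, Heath-Brown, Baker--Harman--Pintz) all refine the exponent $A$ while preserving this structure. The only caveat is that your account is a high-level outline rather than a complete proof --- the partial summation against the density bound in dyadic strips is stated but not carried out, and the precise interplay between the truncation parameter $T$, the zero-free region constant $c$, and the density exponent $A$ that pins down an admissible $\theta$ is left implicit --- but as a road map it is accurate, and it goes well beyond what the paper itself provides for this lemma.
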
\bigskip

\noindent This lemma immediately implies the following proposition.\bigskip

\begin{proposition}\label{prop:Bertrandrefinement}
There exist $\theta\in(0,1)$ and $X_0\in\mathbb{N}$ such that
$$\bP(x)\leqslant x^{\theta}$$
for every non-prime-power $x\geqslant X_0$.
\end{proposition}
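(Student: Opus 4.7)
The plan is to apply Lemma \ref{lemma:Hoheisel} directly, in exactly the way Proposition \ref{prop:epsilon} was derived from the Prime Number Theorem bound on prime gaps. First I would fix the constants $\theta\in(0,1)$ and $X_0\in\mathbb{N}$ provided by Lemma \ref{lemma:Hoheisel}, and then take an arbitrary non-prime-power $x\geqslant X_0$. Because $x\notin\mathbb{P}^\ast$, the second branch of the definition of $\bP$ applies, giving $\bP(x)=x-\bq(x)$.

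Next I would exploit the fact that every prime is itself a prime-power (with $k=1$), so $\mathbb{P}\subseteq\mathbb{P}^\ast\subseteq\mathbb{P}^\ast_1$. Lemma \ref{lemma:Hoheisel} produces a prime $p\in[x-x^\theta,x]$; this $p$ is then a prime-power not exceeding $x$. Since $\bq(x)$ is by definition the largest element of $\mathbb{P}^\ast_1$ not exceeding $x$, we get $\bq(x)\geqslant p\geqslant x-x^\theta$, whence
$$\bP(x)=x-\bq(x)\leqslant x-\left(x-x^\theta\right)=x^\theta,$$
as required.

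There is essentially no obstacle here: the proposition is a direct translation of Hoheisel's prime-gap estimate into the language of $\bP$, mediated only by the trivial inclusion $\mathbb{P}\subseteq\mathbb{P}^\ast$. The non-prime-power hypothesis on $x$ is what lets us bypass the $\frac{x}{p}$ branch of $\bP$ (which, as noted in the paragraph between Propositions \ref{prop:epsilon} and \ref{prop:Bertrandrefinement}, would in general produce ratios $\bP(x)/x$ as large as $1/2$ and thus obstruct any bound of the form $x^\theta$ with $\theta<1$).
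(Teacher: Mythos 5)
Your argument is correct and is precisely what the paper means when it writes that Lemma \ref{lemma:Hoheisel} ``immediately implies'' Proposition \ref{prop:Bertrandrefinement}: the paper leaves the implication implicit, and you have simply spelled out the same one-line deduction (non-prime-power forces the subtraction branch, Hoheisel supplies a prime in $[x-x^\theta,x]$, hence $\bq(x)\geqslant x-x^\theta$).
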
\bigskip

\noindent The authors of \cite{Baker} have proved the existence of $X_0\in\mathbb{N}$ for which one could take $\theta=0.525$.\bigskip

\subsection{The settling time function}

In the first half of this subsection we follow the steps of Pillai's proof in \cite[pages 160-161]{Pillai} of the unboundedness of $\rR$. In the work cited, Pillai proved that
$$\limsup_{x\to\infty}\rR(x)=\infty$$
using a well-known result on the unboundedness of gaps between primes, i.e., that for every $n\in\mathbb{N}$, none of the $n-1$ consecutive positive integers
$$n!+2,\,\,\,n!+3,\,\,\,\ldots,\,\,\,n!+n$$
is prime, implying the existence of two consecutive primes $p_1$ and $p_2$ such that $p_2-p_1\geqslant n$ \cite[Lemma 1]{Pillai}. Here we first modify this argument to show the unboundedness of gaps between prime-powers. Then we will use the result to prove that
$$\limsup_{x\to\infty}\rS(x)=\infty.$$\smallskip

\begin{lemma}\label{lemma:PrimePowerGap}
	For every $n\in\mathbb{N}$ there exist two consecutive prime-powers $q_1$ and $q_2$ such that $q_2-q_1\geqslant n$.
\end{lemma}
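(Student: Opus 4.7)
The plan is to adapt Pillai's construction. His argument rests on the fact that the $n-1$ consecutive integers $n!+2,n!+3,\ldots,n!+n$ are all composite (each $n!+k$ is divisible by $k$). For our purposes this is insufficient, since a composite number may still be a prime-power; indeed, among these entries, the ones with $k\in\mathbb{P}^\ast$ could in principle themselves lie in $\mathbb{P}^\ast$. I therefore expect the main obstacle to be the production of $n-1$ consecutive non-prime-powers, which I would resolve via the Chinese Remainder Theorem.

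Concretely, I would fix $2(n-1)$ distinct primes $p_1,r_1,p_2,r_2,\ldots,p_{n-1},r_{n-1}$, so that the moduli $p_ir_i$ are pairwise coprime. The Chinese Remainder Theorem then yields some (arbitrarily large) $N\in\mathbb{N}$ with $N\equiv -i\pmod{p_ir_i}$ for every $i\in\{1,\ldots,n-1\}$. Each $N+i$ is consequently divisible by the two distinct primes $p_i$ and $r_i$, and therefore cannot lie in $\mathbb{P}^\ast$.

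To conclude, let $q_1$ denote the largest prime-power not exceeding $N$ and $q_2$ the smallest prime-power exceeding $N$. The preceding paragraph forces $q_2\geqslant N+n$, so $q_1$ and $q_2$ are consecutive prime-powers with $q_2-q_1\geqslant n$. The departure from Pillai's argument lies precisely in requiring each chosen consecutive integer to have at least two distinct prime factors, rather than merely being composite; this is exactly what the use of two primes per residue class $p_ir_i$ buys us.
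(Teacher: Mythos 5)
Your proof is correct, and it takes a genuinely different route from the paper's. The paper mimics Pillai's factorial construction: it sets $\mathcal{N}:=\prod_{p\leqslant n} p^{k_p+1}$ with $k_p:=\max\{k:p^k\leqslant n\}$, and shows that for each $i\in\{2,\ldots,n\}$ the number $\mathcal{N}+i$ (which is divisible by $i$) is not a prime-power, via a two-case analysis on whether $i$ is itself a prime-power. The subtle case ---$i=p^\ell$--- is handled by exponent control: $p^{\ell+1}\mid\mathcal{N}$ but $p^{\ell+1}\nmid p^\ell$, so $\nu_p(\mathcal{N}+i)=\ell$ exactly, forcing a second prime factor since $\mathcal{N}+i>p^\ell$. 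Your CRT construction instead engineers two explicit distinct prime factors $p_i,r_i$ into each $N+i$ directly, which collapses the case split into a single uniform argument: every $N+i$ has at least two distinct prime divisors, hence is not in $\mathbb{P}^\ast$, full stop. Your version is arguably cleaner conceptually and makes the central modification of Pillai's idea ---needing two prime factors rather than mere compositeness--- fully explicit; the paper's version has the advantage of being entirely elementary (no CRT) and of paralleling Pillai's $n!$ construction more tightly, which suits the expository framing of the section. Both give the same conclusion with essentially identical bookkeeping at the end; the only trivial omission on your side is the base case $n=1$, which the paper dispatches in one line and your construction handles vacuously.
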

\begin{proof}
    Let $n\in\mathbb{N}$. If $n=1$, clearly the lemma holds. Now, suppose $n\geqslant 2$. Let $p_1$, \ldots, $p_m$, where $m\in\mathbb{N}$, be all prime numbers not exceeding $n$. For every $i\in\{1,\ldots,m\}$, define
    $$k_i=\max\{k\in\mathbb{N}:p_i^k\leqslant n\}.$$
    Furthermore, let
    $$\mathcal{N}:=\prod_{i=1}^{m} p_i^{k_i+1}.$$
    
    We will now prove that none of the $n-1$ consecutive positive integers
    $$\mathcal{N}+2,\,\,\,\mathcal{N}+3,\,\,\,\ldots,\,\,\,\mathcal{N}+n$$	
	is a prime-power. Let $i\in\{2,\ldots,n\}$. Consider the number $\mathcal{N}+i$ which is divisible by $i$. If $i$ is not a prime-power, this immediately implies that $\mathcal{N}+i$ is not a prime-power. If $i$ is a prime-power, say $p^\ell$, then $\mathcal{N}+i=\mathcal{N}+p^\ell$ is a number greater than $p^\ell$ which is not divisible by $p^{\ell+1}$ (since $\mathcal{N}$ is divisible by $p^{\ell+1}$ but $p^\ell$ is not), and so it has a prime factor other than $p$, meaning that it is not a prime-power.
	
	Now, let $q_1$ be the largest prime-power less than $\mathcal{N}+2$ and $q_2$ be the smallest prime-power greater than $\mathcal{N}+n$. Then $$q_2-q_1\geqslant (\mathcal{N}+n+1)-(\mathcal{N}+1)\geqslant n,$$
	proving the lemma.
\end{proof}\bigskip

\begin{theorem}\label{thm:unboundednessS}
	We have
	$$\limsup_{x\to\infty} \rS(x)=\infty.$$
\end{theorem}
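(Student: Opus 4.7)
The plan is to build, inductively, a strictly increasing sequence $\left(x_k\right)_{k=1}^\infty$ of positive integers with $\rS\left(x_k\right)=k$ for every $k\in\mathbb{N}$; this immediately yields $\limsup_{x\to\infty}\rS(x)=\infty$. The base case is trivial: taking $x_1:=1\in\mathbb{P}^\ast_1$ gives $\rS\left(x_1\right)=1$.

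For the inductive step, given $x_k$, I would invoke Lemma \ref{lemma:PrimePowerGap} to obtain two consecutive prime-powers $q_1<q_2$ with $q_2-q_1>x_k$, and set $x_{k+1}:=q_1+x_k$. Since $q_1<x_{k+1}<q_2$, the integer $x_{k+1}$ lies strictly inside a gap between consecutive prime-powers; hence $x_{k+1}\notin\mathbb{P}^\ast_1$ and $\bq\left(x_{k+1}\right)=q_1$. Consequently $\bP\left(x_{k+1}\right)=x_{k+1}-q_1=x_k$, so the orbit of $x_{k+1}$ is precisely the orbit of $x_k$ prefixed by one extra iterate (namely $x_{k+1}$ itself) which lies outside $\mathbb{P}^\ast_1$. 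It follows that $\rS\left(x_{k+1}\right)=\rS\left(x_k\right)+1=k+1$, and the strict increase $x_{k+1}>x_k$ guarantees that the constructed sequence tends to infinity.

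This mirrors the structure of Pillai's original argument for $\rR$, with the gaps between primes replaced by the gaps between prime-powers supplied by Lemma \ref{lemma:PrimePowerGap}. No step appears technically delicate: the inductive construction is essentially forced once that lemma is in hand, so the substantive content of the proof is the prime-power-gap result already established. The only small bookkeeping point is verifying that an integer lying strictly inside a prime-power gap is itself not a prime-power and has the left endpoint of the gap as its $\bq$-value, both of which are immediate from the definition of ``consecutive prime-powers''.
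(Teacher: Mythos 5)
Your proof is correct and takes essentially the same route as the paper: both rely on Lemma~\ref{lemma:PrimePowerGap} and the observation that for a number $y$ strictly between consecutive prime-powers $q_1<q_2$, one has $y\notin\mathbb{P}^\ast_1$, $\bq(y)=q_1$, and hence $\bP(y)=y-q_1$. The only cosmetic difference is that you build an explicit strictly increasing sequence $\left(x_k\right)$ with $\rS\left(x_k\right)=k$ starting from $x_1=1$, whereas the paper phrases the same inductive step as: for every non-prime-power $x$ there is a non-prime-power $y$ with $\rS(y)=\rS(x)+1$.
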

\begin{proof}
The theorem follows if we can prove that for every non-prime-power $x\in\mathbb{N}$ there exists a non-prime-power $y\in\mathbb{N}$ such that $\rS(y)=\rS(x)+1$. Let $x\in\mathbb{N}$ be a non-prime-power. By Lemma \ref{lemma:PrimePowerGap}, there exist two consecutive prime-powers $q_1$ and $q_2$ such that $q_2-q_1\geqslant x+1$, i.e., $q_1+x+1\leqslant q_2$. Now, pick $y=q_1+x$. Then $q_1<y<q_2$, so $y$ is not a prime-power and the largest prime-power not exceeding $y$ is $q_1$. Therefore, $\bP(y)=y-q_1=x$, meaning that $\rS(y)=\rS(x)+1$, as desired.
\end{proof}\bigskip

Luca and Thangadurai proved that $\rR(x)\ll \ln\ln x$ \cite[Theorem 1.1]{Luca}. Now, we will prove that the same asymptotic upper bound also applies for the settling time function $\rS$, following the argument in the work cited. First, we take numbers $X_0\in\mathbb{N}$ and $\theta\in(0,1)$ which satisfy Lemma \ref{lemma:Hoheisel} (and hence Proposition \ref{prop:Bertrandrefinement}). Then we have the following lemma.\bigskip

\begin{lemma}\label{lemma:orderS(x)}
Let $x\geqslant X_0$, and let $\left(x_n\right)_{n=1}^\infty$ be the orbit of $x$. Then for every $k\in\mathbb{N}$ the following holds:
$$\text{If }x_k\geqslant X_0\text{ and }\rS(x)\geqslant k+1,\text{ then }x_{k+1}\leqslant x^{\theta^k}.$$
\end{lemma}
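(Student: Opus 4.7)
The plan is to iterate the bound from Proposition \ref{prop:Bertrandrefinement} along the orbit. First, I would invoke the monotonicity of orbits observed in the introduction: since $\left(x_n\right)_{n=1}^\infty$ is non-increasing, the hypothesis $x_k\geqslant X_0$ forces $x_j\geqslant X_0$ for every $j\in\{1,\ldots,k\}$. Simultaneously, the hypothesis $\rS(x)\geqslant k+1$ means, by definition of $\rS$, that none of $x_1,\ldots,x_k$ lies in $\mathbb{P}^\ast_1$, so each of these iterates is a non-prime-power exceeding $1$.

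With both hypotheses of Proposition \ref{prop:Bertrandrefinement} verified at every iterate up to $x_k$, I would apply that proposition $k$ times to obtain $x_{j+1}=\bP(x_j)\leqslant x_j^\theta$ for each $j\in\{1,\ldots,k\}$. Using the monotonicity of $t\mapsto t^\theta$ on $[1,\infty)$ (recall $\theta\in(0,1)$ and $x_j\geqslant X_0\geqslant 2$), these bounds chain together as
$$x_{k+1}\leqslant x_k^\theta\leqslant\left(x_{k-1}^\theta\right)^\theta=x_{k-1}^{\theta^2}\leqslant\cdots\leqslant x_1^{\theta^k}=x^{\theta^k},$$
which is the desired inequality. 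Formally this is an induction on $k$, whose inductive step combines the single-iterate bound $x_{j+1}\leqslant x_j^\theta$ with the previously established $x_j\leqslant x^{\theta^{j-1}}$.

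There is no substantive obstacle beyond bookkeeping; the one subtlety worth flagging is that Proposition \ref{prop:Bertrandrefinement} must be available at every iterate, not merely at $x_k$. This is precisely why the combination of orbit monotonicity with the assumption $\rS(x)\geqslant k+1$ is essential: the former keeps every $x_j$ above $X_0$, and the latter prevents any intermediate $x_j$ from slipping into $\mathbb{P}^\ast_1$ and thereby escaping the hypothesis of the proposition.
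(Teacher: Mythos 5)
Your proposal is correct and follows essentially the same approach as the paper's proof: an induction on $k$ (which you present in its unrolled, chained form), applying Proposition \ref{prop:Bertrandrefinement} at each step, with orbit monotonicity keeping every iterate above $X_0$ and the hypothesis $\rS(x)\geqslant k+1$ keeping every iterate out of $\mathbb{P}^\ast_1$. The one subtlety you flag --- that the proposition must apply at every iterate, not just at $x_k$ --- is exactly the point the paper's inductive step handles.
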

\begin{proof}
We use induction on $k\in\mathbb{N}$. For $k=1$ the statement reads
$$\text{If }x_1\geqslant X_0\text{ and }\rS(x)\geqslant 2,\text{ then }x_{2}\leqslant x^{\theta},$$
which is true since, if $x_1\geqslant X_0$ and $\rS(x)\geqslant 2$, the latter implying that $x_1$ is not a prime-power, then $$x_2=\bP\left(x_1\right)\leqslant {x_1}^\theta=x^\theta,$$ by Proposition \ref{prop:Bertrandrefinement}.

Now let $k\geqslant 2$, and suppose the statement holds for $k-1$ replacing $k$. Suppose $x_k\geqslant X_0$ and $\rS(x)\geqslant k+1$. The former implies that $x_{k-1}\geqslant X_0$ since $x_{k-1}\geqslant x_k$, while the latter implies that $\rS(x)\geqslant k$ and that $x_{k}$ is not a prime-power. Therefore, our inductive hypothesis gives $x_k\leqslant x^{\theta^{k-1}}$, and so by Proposition \ref{prop:Bertrandrefinement},
$$x_{k+1}=\bP\left(x_k\right)\leqslant {x_k}^\theta\leqslant \left(x^{\theta^{k-1}}\right)^\theta=x^{\theta^k},$$
completing the induction.
\end{proof}\bigskip

\begin{theorem}\label{thm:orderS(x)}
We have
$$\rS(x)\ll\ln\ln x.$$
\end{theorem}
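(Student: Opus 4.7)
The plan is to upgrade Lemma \ref{lemma:orderS(x)} to a global bound on $\rS$ by iterating its conclusion until the orbit is forced below the fixed threshold $X_0$, at which point a trivial finiteness argument finishes. The underlying heuristic is that each application of $\bP$ to a non-prime-power exceeding $X_0$ replaces the current value by (at most) its $\theta$-th power, i.e.\ shifts $\ln\ln$ by the fixed constant $\ln(1/\theta)$; so the orbit must escape the regime $[X_0,\infty)$ within $O(\ln\ln x)$ steps, and from there the settling time is bounded by a constant.

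First I would set $C_0 := \max\{\rS(y) : 1 \leqslant y \leqslant X_0\}$, a finite quantity because $\rS$ is pointwise finite. The bound $\rS(x) \leqslant C_0 \ll \ln\ln x$ is immediate for $x \leqslant X_0$, so attention may be restricted to $x > X_0$. For such $x$ I define
$$K := \left\lceil \log_{1/\theta}\!\left( \frac{\ln x}{\ln X_0} \right) \right\rceil + 1,$$
so that $x^{\theta^K} < X_0$ and $K = O(\ln\ln x)$.

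The argument then splits into cases: either $\rS(x) \leqslant K$, in which case the theorem is immediate, or $\rS(x) \geqslant K + 1$, in which case none of $x_1, \ldots, x_K$ lies in $\mathbb{P}^\ast_1$. In the latter scenario either (i) $x_K \geqslant X_0$, whereupon Lemma \ref{lemma:orderS(x)} applied with $k = K$ gives $x_{K+1} \leqslant x^{\theta^K} < X_0$, or (ii) $x_K < X_0$ already. In both sub-cases the orbit enters $\{1, \ldots, X_0 - 1\}$ within $K + 1$ steps, and the elementary orbit identity $\rS(x) = j + \rS(x_{j+1})$ --- valid for every $j < \rS(x)$, since orbits of $\bP$ do not leave $\mathbb{P}^\ast_1$ once they enter it --- then yields $\rS(x) \leqslant K + C_0 = O(\ln\ln x)$, as desired.

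The only real obstacle is of a bookkeeping nature: I must verify, at the step where Lemma \ref{lemma:orderS(x)} is invoked, both of its hypotheses ($x_k \geqslant X_0$ and $\rS(x) \geqslant k + 1$), and must justify the orbit identity above. All genuine analytic content --- the $\theta$-power contraction at each step --- has already been absorbed into Lemma \ref{lemma:orderS(x)} via Proposition \ref{prop:Bertrandrefinement}, so no further analysis is needed.
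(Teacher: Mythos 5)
Your proof is correct and follows essentially the same strategy as the paper's: iterate Lemma~\ref{lemma:orderS(x)} until the orbit drops below the fixed threshold, then absorb the remaining steps into a constant depending only on $X_0$. The paper streamlines the bookkeeping by setting $k := \min\{\rS(x), \ell\} - 1$, where $\ell$ is the last orbit index at which the iterate is $\geqslant X_0' := \max\{X_0, e\}$, in place of your explicit $K$ and case split, and the replacement $X_0 \mapsto X_0'$ is a precaution your expression $\log_{1/\theta}(\ln x/\ln X_0)$ also tacitly needs (it presupposes $X_0 > 1$) --- harmless here, since Lemma~\ref{lemma:Hoheisel} forces $X_0 \geqslant 2$.
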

\begin{proof}
Define $X_0':=\max\left\{X_0,e\right\}$. Let $x\geqslant X_0'$, and let $\left(x_n\right)_{n=1}^\infty$ be the orbit of $x$. Define
$$k:=\min\{\rS(x),\ell\}-1\in\mathbb{N}_0,\qquad\text{where}\qquad \ell:=\max\left\{\lambda\in\mathbb{N}:x_\lambda\geqslant X_0'\right\},$$
the existence of $\ell$ being guaranteed by the fact that $x_1=x\geqslant X_0'>1$.

By the definition of $k$ we have that $k\leqslant \rS(x)-1$ and $k\leqslant\ell-1$ (i.e., $k+1\leqslant\ell$), implying $\rS(x)\geqslant k+1$ and $x_k\geqslant x_{k+1}\geqslant x_{\ell}\geqslant X_0'\geqslant X_0$, respectively. Then we have the inequality
$$
x_{k+1}\leqslant x^{\theta^k},
$$
which is trivially true if $k=0$ and implied by Lemma \ref{lemma:orderS(x)} if $k\geqslant 1$. This inequality, together with the fact that $x_{k+1}\geqslant X_0'$, implies
$$X_0'\leqslant x^{\theta^k}.$$
Taking logarithms twice gives
$$\ln \ln X_0'\leqslant k \ln\theta + \ln\ln x.$$
Since $X_0'\geqslant e$, the left-hand side is non-negative, and hence so is the right-hand side, i.e.,
$$k\leqslant\frac{\ln\ln x}{\ln\frac{1}{\theta}}.$$
Letting $m:=\max\left\{\rS(1),\ldots,\rS\left(X_0'\right)\right\}$, by the definitions of $k$ and $\ell$ we have
$$\rS(x)\leqslant k+1+m\leqslant\frac{\ln\ln x}{\ln\frac{1}{\theta}}+1+m\ll \ln \ln x,$$
proving the theorem.
\end{proof}\bigskip

%Thus, we have proved that $\rT(x)\ll \ln x$ and $\rS(x)\ll \ln\ln x$. At the end of this paper we will see some computational evidence for a possible direction of improvement of these results.\bigskip

\subsection{Densities of subsets of $\mathbb{N}$ induced by $\bP$} Let us now discuss the densities of some positive integer sets induced by $\bP$, for which the following standard terminology \cite{Sonnenschein,Tenenbaum} will be used. Given a subset $A\subseteq\mathbb{N}$. For every $x\in\mathbb{N}$, we define
$$\bd_x A:=\frac{|A\cap\{1,\ldots,x\}|}{x}.$$
The (\textbf{natural}) \textbf{density} of $A$ is the limit
$$\bd A:=\lim_{x\to\infty}\bd_x A,$$
if it exists. If $\bd A=0$, we say that $A$ is \textbf{sparse}. If $\bd A=1$, we say that $A$ has \textbf{full density}. If $\bd A=1$ and a statement $\mathcal{P}(x)$ holds for every $x\in A$, we say that $\mathcal{P}(x)$ holds for \textbf{almost every} $x\in\mathbb{N}$.

It is fairly well known that the sets $\mathbb{P}$ and $\mathbb{P}^\ast$ are both sparse. This can be deduced by considering the prime and prime-power counting functions
$$\pi(x):=\left|\mathbb{P}\cap\{1,\ldots,\lfloor x\rfloor\}\right|\qquad\text{and}\qquad\Pi(x):=\left|\mathbb{P}^\ast\cap\{1,\ldots,\lfloor x\rfloor\}\right|$$
defined for real numbers $x>0$. Clearly, $\pi(x)\leqslant\Pi(x)$ for every $x>0$. The \textbf{Prime Number Theorem} \cite[page 10]{HardyWright} gives the asymptotic
$$
\pi(x)\sim\frac{x}{\ln x},
$$
from which it follows that $\mathbb{P}$ is sparse. Moreover, since
$$\Pi(x)-\pi(x)=\sum_{j=2}^{\left\lfloor\log_2 x\right\rfloor}\pi\left(x^{\frac{1}{j}}\right)\ll\frac{x^{\frac{1}{2}}}{\ln x^{\frac{1}{2}}}\left(\left\lfloor\log_2 x\right\rfloor-1\right)\ll x^{\frac{1}{2}},$$
then $\Pi(x)\sim\pi(x)$, implying that $\mathbb{P}^\ast$ is also sparse. Some immediate consequences are:\smallskip
\begin{enumerate}\setlength{\itemsep}{4pt}
\item[i)] The set $\rT^{-1}\left(\mathbb{N}_{\geqslant 3}\right)$ has full density because $\rT^{-1}(\{1\})=\{1\}$ and $\rT^{-1}(\{2\})=\mathbb{P}\cup\left(\mathbb{P}^\ast+1\right)$ are both sparse\footnote{For every $k\in\mathbb{N}$, we denote by $\mathbb{N}_{\geqslant k}$ the set of positive integers greater than or equal to $k$. Also, $\mathbb{P}^\ast+1:=\left\{q+1:q\in\mathbb{P}^\ast\right\}$.}.
\item[ii)] The set $\rS^{-1}\left(\mathbb{N}_{\geqslant 2}\right)$ has full density because $\rS^{-1}(\{1\})=\mathbb{P}^\ast_1$ is sparse.
\item[iii)] The set $\left\{x\in\mathbb{N}:\bP(x)\leqslant\varepsilon x\right\}$ is sparse if $\varepsilon\leqslant 0$ and has full density if $\varepsilon>0$.
\item[iv)] There exists $\theta\in(0,1)$ such that the set $\left\{x\in\mathbb{N}:\bP(x)\leqslant x^\theta\right\}$ has full density.
\end{enumerate}\smallskip
The last two follow from Propositions \ref{prop:epsilon} and \ref{prop:Bertrandrefinement}, respectively.

Every $x\in\mathbb{N}_{\geqslant 2}\backslash\mathbb{P}$ and its image under the \textit{prime map} is coprime; for if they have a prime common divisor then it must divide $\mathbf{p}(x)$, and so must equal to $\mathbf{p}(x)$, implying that $x\geqslant 2\mathbf{p}(x)$ which contradicts Bertrand's Postulate. Under the \textit{prime-power map}, however, the analogue does not hold. Indeed, the number $34\in\mathbb{N}_{\geqslant2}\backslash\mathbb{P}^\ast$ and its image $\bP(34)=2$ are not coprime. Nonetheless, we shall now prove that $x$ and $\bP(x)$ are coprime for almost every $x\in\mathbb{N}$. For this purpose, we define
$$\mathbb{G}:=\left\{x\in\mathbb{N}:\gcd(x,\bP(x))>1\right\},$$
and we choose numbers $X_1\in\mathbb{N}$ and $\theta\in(0,1)$ such that for every $x\geqslant X_1$ the interval $\left[x,x+x^\theta\right]$ contains a prime number\footnote{The existence is again guaranteed by \cite{Baker}.}. Our aim is to show that $\bd\mathbb{G}=0$ via the following lemma.\bigskip

\begin{lemma}\label{lemma:dG}
For every prime-power $p^k\geqslant X_1$, where $p\in\mathbb{P}$ and $k\in\mathbb{N}$, we have
$$\textstyle\left|\left[\mathbb{G}\cap \bq^{-1}\left(\left\{p^k\right\}\right)\right]\backslash\mathbb{P}^\ast\right|\leqslant p^{k\theta-1}.$$
\end{lemma}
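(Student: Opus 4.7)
The plan is to describe $\left[\mathbb{G} \cap \bq^{-1}(\{p^k\})\right] \setminus \mathbb{P}^\ast$ explicitly as a set of multiples of $p$ lying in a prime-power gap, and then bound its cardinality using the hypothesis on $X_1$ and $\theta$. Letting $q^\ast$ denote the smallest prime-power strictly greater than $p^k$, the condition $\bq(x) = p^k$ amounts to $p^k \leqslant x < q^\ast$; since the only prime-power in this range is $p^k$ itself, we obtain $\bq^{-1}(\{p^k\}) \setminus \mathbb{P}^\ast = \{p^k + 1, p^k + 2, \ldots, q^\ast - 1\}$.

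For each such $x$, one has $\bP(x) = x - p^k$, so any common divisor of $x$ and $\bP(x)$ divides $p^k$; since $p$ is the sole prime factor of $p^k$, we get $\gcd(x, \bP(x)) > 1$ if and only if $p \mid x$. Therefore $\left[\mathbb{G} \cap \bq^{-1}(\{p^k\})\right] \setminus \mathbb{P}^\ast$ consists precisely of the multiples of $p$ strictly between $p^k$ and $q^\ast$. Since $p \mid p^k$, these form the arithmetic progression $p^k + p, p^k + 2p, \ldots$, and hence their count is exactly $\left\lfloor (q^\ast - p^k - 1)/p \right\rfloor$.

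It then remains to bound $q^\ast - p^k$ via a small case split. When $k = 1$, Bertrand's Postulate supplies a prime (hence a prime-power) in $(p, 2p)$, so $q^\ast < 2p$ and the count is zero, which is trivially at most $p^{\theta - 1}$. When $k \geqslant 2$, the number $p^k$ is not prime, so the prime guaranteed in $\left[p^k, p^k + p^{k\theta}\right]$ by the choice of $X_1$ and $\theta$ lies strictly above $p^k$, whence $q^\ast \leqslant p^k + p^{k\theta}$ and $\left\lfloor (q^\ast - p^k - 1)/p \right\rfloor \leqslant (p^{k\theta} - 1)/p < p^{k\theta - 1}$, as desired. The only mild subtlety is this $k=1$ edge case, where applying the Hoheisel-type bound at $p^k$ might return only the trivial prime $p^k$ itself; Bertrand's Postulate dispatches it cleanly, so no real obstacle arises.
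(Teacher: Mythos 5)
Your proof is correct and follows the same route as the paper's: identify the elements of $\left[\mathbb{G}\cap\bq^{-1}\left(\left\{p^k\right\}\right)\right]\backslash\mathbb{P}^\ast$ as the multiples of $p$ lying in the prime-power gap above $p^k$, then bound that gap using the Hoheisel-type hypothesis and count. Where you go further is the case split at $k=1$, and that is a genuine refinement rather than a cosmetic one. The paper simply asserts that any such $x$ lies in $\left(p^k,p^k+p^{k\theta}\right]$, implicitly on the strength of a prime lying in $\left[p^k,p^k+p^{k\theta}\right]$; but when $k=1$ the guaranteed prime could be $p^k=p$ itself, which tells one nothing about where the next prime-power sits. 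The paper's assertion is still (vacuously) true there, since Bertrand's Postulate already forces the set to be empty when $k=1$ --- precisely the observation you make explicit. So the two arguments land on the same bound; yours just documents the edge case that the paper leaves to the reader.
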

\begin{proof}
\sloppy Let $p^k\geqslant X_1$ be a prime-power, where $p\in\mathbb{P}$ and $k\in\mathbb{N}$. For every $x\in \left[\mathbb{G}\cap \bq^{-1}\left(\left\{p^k\right\}\right)\right]\backslash\mathbb{P}^\ast$ we have that $x\notin\mathbb{P}^\ast$ is a preimage of $p^k$ under $\bq$ satisfying
$$\textstyle 1<\gcd(x,\bP(x))=\gcd\left(x,x-p^k\right)=\gcd\left(x,p^k\right),$$
so $x$ is a multiple of $p$, and that $x\in\left(p^k,p^k+p^{k\theta}\right]$. Therefore, the number of elements of $\left[\mathbb{G}\cap \bq^{-1}\left(\left\{p^k\right\}\right)\right]\backslash\mathbb{P}^\ast$ is bounded above by the number of multiples of $p$ in the interval $\left(p^k,p^k+p^{k\theta}\right]$, namely
$$\left|\left\{p^k+p,\ldots,p\left\lfloor\frac{p^k+p^{k\theta}}{p}\right\rfloor\right\}\right|=\frac{p\left\lfloor\frac{p^k+p^{k\theta}}{p}\right\rfloor-p^k}{p}\leqslant p^{k\theta-1},$$
proving the lemma.
\end{proof}\bigskip

\begin{theorem}\label{thm:coprime}
The set $\mathbb{G}$ is sparse.
\end{theorem}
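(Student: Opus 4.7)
The plan is to bound $\left|\mathbb{G}\cap\{1,\ldots,x\}\right|$ by splitting $\mathbb{G}$ into its prime-power and non-prime-power parts, and, within the latter, grouping elements according to their $\bq$-image so as to invoke Lemma \ref{lemma:dG}.

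First, I would observe that $\mathbb{G}\cap\mathbb{P}^\ast=\left\{p^k:p\in\mathbb{P},\,k\geqslant 2\right\}$: for $k=1$ we have $\bP(p)=1$ and thus $\gcd(p,1)=1$, whereas for $k\geqslant 2$ we have $\bP\left(p^k\right)=p^{k-1}$, which shares with $p^k$ the factor $p^{k-1}>1$. By the same estimate already used in this subsection to show $\Pi(x)-\pi(x)\ll\sqrt{x}$,
$$\left|\mathbb{G}\cap\mathbb{P}^\ast\cap\{1,\ldots,x\}\right|\leqslant\sum_{k=2}^{\lfloor\log_2 x\rfloor}\pi\left(x^{\frac{1}{k}}\right)\ll\sqrt{x}.$$

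Next, every $n\in\left[\mathbb{G}\cap\{1,\ldots,x\}\right]\backslash\mathbb{P}^\ast$ has $\bq(n)\leqslant n\leqslant x$, so
$$\left|\left[\mathbb{G}\cap\{1,\ldots,x\}\right]\backslash\mathbb{P}^\ast\right|\leqslant\sum_{p^k\leqslant x}\left|\left[\mathbb{G}\cap\bq^{-1}\left(\left\{p^k\right\}\right)\right]\backslash\mathbb{P}^\ast\right|.$$
The finitely many prime-powers $p^k<X_1$ together contribute $O(1)$, since $\left\{n\in\mathbb{N}:\bq(n)<X_1\right\}$ is a bounded set. For the remaining range $X_1\leqslant p^k\leqslant x$, Lemma \ref{lemma:dG} bounds the contribution by $\sum_{X_1\leqslant p^k\leqslant x}p^{k\theta-1}$.

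Finally, I would estimate this last sum by separating $k=1$ from $k\geqslant 2$. Since $\theta<1$, the function $t\mapsto t^{\theta-1}$ is decreasing, so $\sum_{p\leqslant x}p^{\theta-1}\leqslant\sum_{n=2}^{x}n^{\theta-1}\ll x^{\theta}$. For each fixed $k\geqslant 2$ there are at most $\pi\left(x^{\frac{1}{k}}\right)\leqslant x^{\frac{1}{k}}$ prime-powers $p^k\leqslant x$, and each term is at most $x^{\theta-\frac{1}{k}}$, giving $O\left(x^\theta\right)$ per value of $k$; summing over $k\in\{2,\ldots,\lfloor\log_2 x\rfloor\}$ yields $O\left(x^\theta\ln x\right)$. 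Assembling the three estimates,
$$\left|\mathbb{G}\cap\{1,\ldots,x\}\right|\ll\sqrt{x}+x^\theta\ln x=o(x),$$
whence $\bd\,\mathbb{G}=0$. The only obstacle is organisational: Lemma \ref{lemma:dG} applies neither to the prime-power elements of $\mathbb{G}$ nor to the initial segment where $\bq$-values lie below $X_1$, and both of these must be peeled off and verified to contribute sparsely. Once this is done, the double sum that remains is comfortably $o(x)$ thanks to $\theta<1$.
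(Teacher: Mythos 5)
Your proof follows essentially the same route as the paper's: split $\mathbb{G}$ into its prime-power and non-prime-power parts, dispose of the former by sparsity of $\mathbb{P}^\ast$, and bound the latter by grouping according to $\bq$-image and invoking Lemma \ref{lemma:dG}; you are slightly more explicit than the paper about the exact content of $\mathbb{G}\cap\mathbb{P}^\ast$ and about absorbing the range $\bq(n)<X_1$ into an $O(1)$ term, which is a nice touch. Your estimation of the double sum $\sum_{p^k\leqslant x}p^{k\theta-1}$ (separating $k=1$ from $k\geqslant 2$) differs cosmetically from the paper's (which sums the geometric series in $k$ and then a harmonic-type sum over the base), but both deliver the same $O\!\left(x^\theta\ln x\right)$ bound.

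One step needs justification: the claim that each term satisfies $p^{k\theta-1}\leqslant x^{\theta-\frac{1}{k}}$ for $p^k\leqslant x$. Since $p^{k\theta-1}=\left(p^k\right)^{\theta-\frac{1}{k}}$, the inequality holds when $\theta-\frac{1}{k}\geqslant 0$ but \emph{reverses} when $\theta-\frac{1}{k}<0$. Thus it is valid for all $k\geqslant 2$ precisely when $\theta\geqslant\frac{1}{2}$. This is satisfied by the admissible value $\theta=0.525$ from \cite{Baker}, which is what the paper fixes, so the gap is benign; but you should say so explicitly, or alternatively handle the finitely many $k$ with $\frac{1}{k}>\theta$ by noting that each such term is $O(1)$ and there are at most $x^{\frac{1}{k}}\leqslant\sqrt{x}$ of them, contributing $O\!\left(\sqrt{x}\right)$ in total.
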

\begin{proof}
Since
$$\bd\mathbb{G}=\bd\left(\mathbb{G}\cap\mathbb{P}^\ast\right)+\bd\left(\mathbb{G}\backslash\mathbb{P}^\ast\right)$$
and $\bd\left(\mathbb{G}\cap\mathbb{P}^\ast\right)=0$, it suffices to prove that $\bd\left(\mathbb{G}\backslash\mathbb{P}^\ast\right)=0$.

Notice that for every $x\in\mathbb{N}$ we have
$$\bd_x\left(\mathbb{G}\backslash\mathbb{P}^\ast\right)=\frac{\left|\left(\mathbb{G}\backslash\mathbb{P}^\ast\right)\cap\{1,\ldots,x\}\right|}{x}\leqslant \frac{1}{x}\sum_{1\leqslant p^k\leqslant x}\textstyle\left|\left[\mathbb{G}\cap \bq^{-1}\left(\left\{p^k\right\}\right)\right]\backslash\mathbb{P}^\ast\right|,$$
where the summation is taken over all prime-powers $p^k\in\{1,\ldots,x\}$. Now, the last expression is equal to
$$\frac{1}{x}\sum_{1\leqslant p\leqslant x}\sum_{k=1}^{\left\lfloor\log_p x\right\rfloor}\textstyle\left|\left[\mathbb{G}\cap \bq^{-1}\left(\left\{p^k\right\}\right)\right]\backslash\mathbb{P}^\ast\right|,$$
where the outer summation is taken over all primes $p\in\{1,\ldots,x\}$. By Lemma \ref{lemma:dG}, this expression is asymptotically bounded above by
$$\frac{1}{x}\sum_{X_1\leqslant p\leqslant x}\sum_{k=1}^{\left\lfloor\log_p x\right\rfloor}p^{k\theta-1},$$
where the outer summation is taken over all primes $p\in\left\{X_1,\ldots,x\right\}$. This expression is bounded above by
$$\frac{1}{x}\sum_{j=2}^x\sum_{k=1}^{\left\lfloor\log_j x\right\rfloor}j^{k\theta-1} = \frac{1}{x}\sum_{j=2}^x \frac{j^{\theta-1}\left[\left(j^\theta\right)^{\left\lfloor\log_j x\right\rfloor}-1\right]}{j^\theta-1}\sim\frac{1}{x^{1-\theta}}\sum_{j=2}^x\frac{1}{j}\leqslant \frac{\ln x}{x^{1-\theta}}.$$
Since $\theta\in(0,1)$, the last expression vanishes as $x\to\infty$, so we have proved the theorem.
\end{proof}\bigskip

\subsection{Connection to prime-power representations} As the \textit{prime map} is closely connected to \textbf{prime representations} \cite[page 159]{Pillai}, so is the \textit{prime-power map} to \textbf{prime-power representations} of positive integers. More precisely, every initial condition $x\in\mathbb{N}$ is associated to, besides its orbit, the sequence $\left(\bq\left(x_n\right)\right)_{n=1}^{\rS(x)}$ which gives a way to express $x$ as the sum of elements of $\mathbb{P}^\ast_1$:
\begin{eqnarray}\label{eq:primepowerrepresentation}
x &=& x_1\nonumber\\
  &=& \bq\left(x_1\right) + x_2\nonumber\\
  &=& \bq\left(x_1\right) + \bq\left(x_2\right) + x_3\nonumber\\
  &=& \cdots\nonumber\\
  &=& \bq\left(x_1\right) + \bq\left(x_2\right)+\cdots+\bq\left(x_{\rS(x)-1}\right)+x_{\rS(x)}\nonumber\\
  &=& \bq\left(x_1\right) + \bq\left(x_2\right)+\cdots+\bq\left(x_{\rS(x)-1}\right)+\bq\left(x_{\rS(x)}\right).
\end{eqnarray}
Here we have that
$$\bq\left(x_1\right)> \bq\left(x_2\right)> \cdots > \bq\left(x_{\rS(x)}\right)$$
which follows from the following two lemmas.\bigskip

\begin{lemma}\label{lemma:monotonicity}
For every $x,y\in\mathbb{N}$, if $x\leqslant y$ then $\bq(x)\leqslant \bq(y)$.
\end{lemma}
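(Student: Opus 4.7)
The statement is a monotonicity property of the function $\bq$, which is defined as the largest element of $\mathbb{P}^\ast_1$ not exceeding its argument. The plan is to exploit this defining property directly, with no need for any case analysis on whether $x$ or $y$ is itself a prime-power.

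More precisely, the strategy is: fix $x \leqslant y$ in $\mathbb{N}$, and unpack the definition of $\bq(x)$ to extract two facts, namely $\bq(x) \in \mathbb{P}^\ast_1$ and $\bq(x) \leqslant x$. Combining the second fact with the hypothesis $x \leqslant y$ via transitivity yields $\bq(x) \leqslant y$. This shows that $\bq(x)$ is itself a member of the set
$$\left\{q \in \mathbb{P}^\ast_1 : q \leqslant y\right\}$$
over which the maximum defining $\bq(y)$ is taken. Since $\bq(y)$ is the largest such element, we conclude $\bq(x) \leqslant \bq(y)$, as required.

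There is no real obstacle here: the argument is a one-line consequence of the definition of $\bq$ as a maximum, and does not require Bertrand's Postulate or any property specific to prime-powers (indeed, it would hold verbatim with $\mathbb{P}^\ast_1$ replaced by any nonempty subset of $\mathbb{N}$ containing $1$). The lemma is stated mainly as a bookkeeping tool for the subsequent argument establishing the strict inequalities $\bq(x_1) > \bq(x_2) > \cdots > \bq(x_{\rS(x)})$ in the prime-power representation \eqref{eq:primepowerrepresentation}.
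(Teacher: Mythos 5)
Your proof is correct and uses the same underlying observation as the paper's: $\bq(x)\in\mathbb{P}^\ast_1$ and $\bq(x)\leqslant x\leqslant y$, so $\bq(x)$ competes in the maximum defining $\bq(y)$. The only cosmetic difference is that you argue directly while the paper phrases it as a proof by contradiction (assume $\bq(x)>\bq(y)$ and contradict the maximality of $\bq(y)$); these are the same argument.
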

\begin{proof}
If the hypothesis holds and $\bq(x)>\bq(y)$, then $\bq(y)<\bq(x)\leqslant x\leqslant y$, so $\bq(x)$ is an element of $\mathbb{P}^\ast_1$ not exceeding $y$ which is greater than $\bq(y)$, contradicting the definition of $\bq(y)$.
\end{proof}\bigskip

%In fact, Bertrand's Postulate implies that each inequality in \eqref{eq:chainofineqs} is strict, i.e., the following lemma.\bigskip

\begin{lemma}\label{lemma:monotonicity2}
For every initial condition $x\in\mathbb{N}$, the sequence $\left(\bq\left(x_n\right)\right)_{n=1}^{\rS(x)}$ is monotonically decreasing.
\end{lemma}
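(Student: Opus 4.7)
The plan is to prove the strict inequality $\bq\left(x_{n+1}\right)<\bq\left(x_n\right)$ for every $n\in\{1,\ldots,\rS(x)-1\}$, which instantly yields the required monotonic decrease of $\left(\bq\left(x_n\right)\right)_{n=1}^{\rS(x)}$.

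First I would observe that, since $n<\rS(x)$, the definition of $\rS$ forces $x_n\notin\mathbb{P}^\ast_1$. In particular $x_n$ is not a prime-power (indeed $x_n\geqslant 6$, the smallest non-prime-power exceeding $1$), so the middle clause of the definition of $\bP$ applies and yields $x_{n+1}=x_n-\bq\left(x_n\right)$. This reduces the task to showing $x_n-\bq\left(x_n\right)<\bq\left(x_n\right)$, i.e., the strict inequality $\bq\left(x_n\right)>x_n/2$; once this is in hand, the chain $\bq\left(x_{n+1}\right)\leqslant x_{n+1}<\bq\left(x_n\right)$ is automatic, using only the defining upper bound $\bq(y)\leqslant y$.

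To establish the lower bound $\bq\left(x_n\right)>x_n/2$, I would mimic the proof of Lemma \ref{lemma:Bertrand2}: Bertrand's Postulate (Lemma \ref{lemma:Bertrand}) supplies a prime $p$ with $x_n/2<p<x_n$, and since $p\in\mathbb{P}^\ast$ and $p\leqslant x_n$, one gets $\bq\left(x_n\right)\geqslant p>x_n/2$, as needed.

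The main (and essentially only) obstacle is the passage from a weak to a strict inequality. Lemma \ref{lemma:Bertrand2} on its own yields only $x_{n+1}\leqslant x_n/2$, which leaves open the possibility $x_{n+1}=\bq\left(x_n\right)$ and thus only $\bq\left(x_{n+1}\right)\leqslant\bq\left(x_n\right)$ via Lemma \ref{lemma:monotonicity}; this is why one has to reopen Bertrand's Postulate and exploit that the prime it produces strictly exceeds $x_n/2$. Everything else is routine bookkeeping.
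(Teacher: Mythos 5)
Your proof is correct and rests on the same key fact as the paper's proof: Bertrand's Postulate gives $\bq\left(x_n\right)>x_n/2$ for the non-prime-power $x_n$. The paper packages this as a contradiction argument (assume $\bq\left(x_n\right)=\bq\left(x_{n+1}\right)$, derive $2\bq\left(x_n\right)\leqslant x_n$, then contradict via Bertrand), whereas you run it directly through $\bq\left(x_{n+1}\right)\leqslant x_{n+1}=x_n-\bq\left(x_n\right)<\bq\left(x_n\right)$, which also sidesteps the paper's reliance on Lemma \ref{lemma:monotonicity}.
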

\begin{proof}
Let $x\in\mathbb{N}$. If $\rS(x)=1$ then there is nothing to prove. Otherwise, we have $\bq\left(x_1\right)\geqslant \bq\left(x_2\right)\geqslant \cdots \geqslant \bq\left(x_{\rS(x)}\right)$ by Lemma \ref{lemma:monotonicity}. Suppose for a contradiction that there exists $n\in\{1,\ldots,\rS(x)-1\}$ such that $\bq\left(x_n\right)=\bq\left(x_{n+1}\right)$. Then $$2\bq\left(x_n\right)=\bq\left(x_n\right)+\bq\left(x_{n+1}\right)=\left(x_n-x_{n+1}\right)+\left(x_{n+1}-x_{n+2}\right)\leqslant x_n.$$
But between $\bq\left(x_n\right)$ and $2\bq\left(x_n\right)$ there is a prime by Bertrand's Postulate, contradicting the definition of $\bq\left(x_n\right)$.
\end{proof}\bigskip

Clearly, there could be many ways to represent $x\in\mathbb{N}$ as the sum of elements of $\mathbb{P}^\ast_1$. The one given by \eqref{eq:primepowerrepresentation} is the result of applying the \textbf{greedy algorithm}: for each $n\in\{1,\ldots,\rS(x)\}$, one chooses the largest element $\bq\left(x_n\right)$ of $\mathbb{P}^\ast_{1}$ not exceeding $x_n$. As in \cite[page 164]{Pillai} and \cite[page 695]{Luca}, a question one could ask is, for every $s\in\mathbb{N}$, what is the smallest positive integer which is expressed by the greedy algorithm as the sum of exactly $s$ elements of $\mathbb{P}^\ast_1$? These are precisely the numbers $\xi_s:=\min\rS^{-1}(\{s\})$ for every $s\in\mathbb{N}$. The values of $\xi_1$, $\xi_2$, $\xi_3$, and $\xi_4$ are
$$1=1,\quad 6=5+1,\quad 95=89+5+1,\quad\text{and}\quad 360748=360653+89+5+1,$$
respectively. Anyone attempting to compute the subsequent values should take into consideration the following facts.\bigskip

\begin{proposition}\label{prop:xis}\textcolor{white}{a}
\begin{enumerate}
\item[i)] For every $s\in\mathbb{N}$ we have $\bP\left(\xi_{s+1}\right)=\xi_s$.
\item[ii)] For every $s\in\mathbb{N}$ we have $\xi_{s+1}=\min\bP^{-1}\left(\left\{\xi_{s}\right\}\right)$.
\item[iii)] For every integer $s\geqslant 3$, the number $\xi_s$ is not a prime-power.
\item[iv)] We have $$\frac{\xi_{s+1}}{\xi_s},\frac{\bq\left(\xi_{s+1}\right)}{\bq\left(\xi_s\right)}\xrightarrow{x\to\infty}\infty.$$
\end{enumerate}
\end{proposition}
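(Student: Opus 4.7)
The plan is to prove the four parts roughly in order, with part (i) serving as the workhorse.

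For part (i), the easier direction is $\bP(\xi_{s+1})\geqslant\xi_s$: from $\rS(\xi_{s+1})=s+1$ one reads off $\rS(\bP(\xi_{s+1}))=s$, so $\bP(\xi_{s+1})\in\rS^{-1}(\{s\})$. For the reverse inequality I would argue by contradiction: assuming $\bP(\xi_{s+1})>\xi_s$, set $q^{\ast}:=\bq(\xi_{s+1})$ and consider the number $\xi_{s+1}':=q^{\ast}+\xi_s$, which is strictly smaller than $q^{\ast}+\bP(\xi_{s+1})=\xi_{s+1}$. Because $\xi_{s+1}$ itself is not a prime-power (its settling time is at least $2$), the interval $(q^{\ast},\xi_{s+1}]$ contains no prime-power, hence neither does its sub-interval $(q^{\ast},\xi_{s+1}']$. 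This forces $\xi_{s+1}'\notin\mathbb{P}^{\ast}$ and $\bq(\xi_{s+1}')=q^{\ast}$, so $\bP(\xi_{s+1}')=\xi_s$ and $\rS(\xi_{s+1}')=s+1$, contradicting the minimality of $\xi_{s+1}$.

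Parts (ii) and (iii) are then short. For (iii), $\rS(\xi_s)=s\geqslant 2$ forces $\xi_s\notin\mathbb{P}^{\ast}_{1}$, in particular $\xi_s$ is not a prime-power (this is even valid for $s\geqslant 2$). For (ii), this observation combined with the elementary fact that $\bP$ maps $\mathbb{P}^{\ast}_{1}$ into itself (via the prime-division branch) shows that $\bP^{-1}(\{\xi_s\})\cap\mathbb{P}^{\ast}_{1}=\varnothing$ for $s\geqslant 2$; hence every $z\in\bP^{-1}(\{\xi_s\})$ satisfies $\rS(z)=1+\rS(\xi_s)=s+1$, whence $z\geqslant\xi_{s+1}$. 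Part (i) places $\xi_{s+1}$ itself in $\bP^{-1}(\{\xi_s\})$, giving the claimed equality. (The corner value $s=1$ requires interpreting the preimage set so as to exclude the trivial preimage $1$ of the fixed point, and can be checked directly.)

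For part (iv), I would combine part (i) with Proposition \ref{prop:Bertrandrefinement}. Since $\xi_{s+1}$ is a non-prime-power tending to infinity with $s$ (the sequence is strictly increasing, by part (i) together with $\bP(x)<x$ for $x\geqslant 2$), for all sufficiently large $s$ Proposition \ref{prop:Bertrandrefinement} applies and yields $\xi_s=\bP(\xi_{s+1})\leqslant\xi_{s+1}^{\theta}$, equivalently $\xi_{s+1}\geqslant\xi_s^{1/\theta}$. Since $1/\theta>1$, this gives $\xi_{s+1}/\xi_s\geqslant\xi_s^{1/\theta-1}\to\infty$. For the $\bq$-ratio, I would use $\bq(\xi_{s+1})=\xi_{s+1}-\xi_s$ (immediate from part (i) and $\xi_{s+1}\notin\mathbb{P}^{\ast}$) together with the trivial bound $\bq(\xi_s)\leqslant\xi_s$ to deduce $\bq(\xi_{s+1})/\bq(\xi_s)\geqslant\xi_{s+1}/\xi_s-1\to\infty$.

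The main obstacle is the reverse inequality in part (i): one must confirm that the surrogate $\xi_{s+1}'$ genuinely inherits the greedy decomposition $\xi_{s+1}'=\bq(\xi_{s+1}')+\bP(\xi_{s+1}')=q^{\ast}+\xi_s$, which rests on the short but slightly delicate observation that the absence of prime-powers in $(q^{\ast},\xi_{s+1}]$ forces the same in the shrunken interval $(q^{\ast},\xi_{s+1}']$. Once (i) is in hand, the remaining items follow either directly or by a single invocation of Proposition \ref{prop:Bertrandrefinement}.
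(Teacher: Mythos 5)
Your proposal is correct and follows essentially the same route as the paper: the reverse inequality in part (i) is established via exactly the paper's contradiction, namely comparing $\bq(\xi_{s+1})+\xi_s$ against $\xi_{s+1}$ and checking that the greedy decomposition of the smaller number has the same leading term. There are two small places where you diverge, both to your advantage: for (iii) you observe that $\rS(\xi_s)=s\geqslant 2$ directly excludes $\xi_s\in\mathbb{P}^\ast_1$, giving the claim already for $s\geqslant 2$ and without invoking (i) at all (the paper instead routes (iii) through (i) and hence only states it for $s\geqslant 3$); and for the $\bq$-ratio in (iv) you bound $\bq(\xi_s)\leqslant\xi_s$ and get the limit in one line, whereas the paper writes $\bq(\xi_s)=\xi_s-\xi_{s-1}$ and manipulates the resulting fraction. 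Your parenthetical caveat about $s=1$ in (ii) is well taken — since $1\in\bP^{-1}(\{\xi_1\})$, the statement needs the fixed point excluded or $s\geqslant 2$; the paper glosses over this by declaring (ii) an ``immediate consequence of i)''.
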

\begin{proof}
First, we prove i). Let $s\in\mathbb{N}$. Since $\rS\left(\xi_{s+1}\right)=s+1$, then $\rS\left(\bP\left(\xi_{s+1}\right)\right)=s$, so $\xi_s\leqslant \bP\left(\xi_{s+1}\right)$. To prove that $\xi_s=\bP\left(\xi_{s+1}\right)$, suppose for a contradiction that $\xi_s< \bP\left(\xi_{s+1}\right)$. Then,
$$\bq\left(\xi_{s+1}\right)<\bq\left(\xi_{s+1}\right)+\xi_s<\bq\left(\xi_{s+1}\right)+\bP\left(\xi_{s+1}\right)=\xi_{s+1}.$$
Since $\bq\left(\xi_{s+1}\right)$ is the largest prime-power not exceeding $\xi_{s+1}$, this implies that $\bq\left(\xi_{s+1}\right)+\xi_s$ is not a prime-power, and that $\bq\left(\xi_{s+1}\right)$ is also the largest prime-power not exceeding $\bq\left(\xi_{s+1}\right)+\xi_s$. The latter implies that
$$\bP\left(\bq\left(\xi_{s+1}\right)+\xi_s\right)=\left[\bq\left(\xi_{s+1}\right)+\xi_s\right]-\bq\left(\xi_{s+1}\right)=\xi_s,$$
so $\bq\left(\xi_{s+1}\right)+\xi_s$ is a positive integer less than $\xi_{s+1}$ having settling time
$$\rS\left(\bq\left(\xi_{s+1}\right)+\xi_s\right)=\rS\left(\xi_s\right)+1=s+1,$$
contradicting the definition of $\xi_{s+1}$.

Both ii) and iii) are immediate consequences of i). To prove iv), notice that, by Proposition \ref{prop:Bertrandrefinement}, there exists $\theta\in(0,1)$ such that
$$\xi_{s}=\bP\left(\xi_{s+1}\right)\leqslant{\xi_{s+1}}^\theta,\qquad\text{i.e.},\qquad \frac{\xi_{s+1}}{\xi_s}\geqslant {\xi_{s+1}}^{1-\theta},$$
for all sufficiently large values of $s$. This implies
$$\frac{\xi_{s+1}}{\xi_s}\xrightarrow{s\to\infty}\infty\qquad\text{and}\qquad\frac{\bq\left(\xi_{s+1}\right)}{\bq\left(\xi_s\right)}=\frac{\xi_{s+1}-\xi_s}{\xi_s-\xi_{s-1}}=\frac{\frac{\xi_{s+1}}{\xi_s}-1}{1-\frac{1}{\left(\frac{\xi_s}{\xi_{s-1}}\right)}}\xrightarrow{s\to\infty}\infty,$$
as desired.
\end{proof}\bigskip

Part i) of Proposition \ref{prop:xis} also means that for every $s\in\mathbb{N}$ we have $\xi_{s+1}=\xi_s+q_1$, where $\left(q_1,q_2\right)$ is the first pair of consecutive prime-powers with $q_2-q_1\geqslant\xi_s+1$. Therefore, to compute $\xi_5$ one must find the first pair of consecutive prime-powers differing by at least $360749$.\bigskip

\section{The family of maps $\bP_p$}\label{sec:Pp}

The unpredictability of the functions $\rT$ and $\rS$ largely originates from the fact that the prime number $p$ acting at each iteration of $\bP$ is not fixed. Take as an initial condition, for instance, $x_1=35$. At the first iteration, which produces $x_2=\bP(35)=35-2^5=3$, the acting prime is $2$. However, at the second, which produces $x_3=\bP(3)=\frac{3}{3}=1$, the acting prime is $3$. In order to eliminate this unpredictability, and for a comparison, let us now consider a modification of the map $\bP$ constructed by fixing the acting prime. This consists in a one-parameter family of maps $\bP_p$, where $p\in\mathbb{P}$. As we shall see, all orbits of $\bP_p$ are predictable via the base-$p$ representations of the iterates (Lemma \ref{transmap}), and hence so are their transient lengths and settling times (Theorem \ref{thm:Fr}).\bigskip

\subsection{The maps $\bP_p$}
 
Fix a prime $p\in\mathbb{P}$. Define the map $\bP_p:\mathbb{N}\to\mathbb{N}$ by
\begin{equation}\label{eq:Fr}
\bP_p(x):=\left\{\begin{array}{cl}
x&\text{if }1\leqslant x\leqslant p-1,\\
x-\bq_p(x)&\text{if }x\text{ is not a power of }p,\\
\frac{x}{p}&\text{if }x\text{ is a power of }p,
\end{array}\right.
\end{equation}
where $\bq_p(x)$ is the largest power of $p$ not exceeding $x$. Notice that this map has $p-1$ distinct fixed points: $1$, \ldots, $p-1$.

As in the original map, the \textbf{orbit} of $x\in\mathbb{N}$ under the map $\bP_p:\mathbb{N}\to\mathbb{N}$ is the sequence $\left(x_n\right)_{n=1}^\infty$ where
$$x_1=x\qquad\quad\text{and}\quad\qquad x_{n+1}=\bP_p\left(x_n\right)\quad\text{for every }n\in\mathbb{N}.$$
The \textbf{transient length} and \textbf{limit} of $x$ under $\bP_p$ are the positive integers
$$\rT_p(x):=\min\bigl\{n\in\mathbb{N}:x_n\in\{1,\ldots,p-1\}\bigr\}\qquad\text{and}\qquad \rL_p(x):=x_{\rT_p(x)},$$
respectively. Equivalently, $\rL_p(x)=\lim\limits_{n\to\infty} x_n$, and $\rT_p(x)$ is the index of the first term in the orbit having value $\rL_p(x)$. One immediately sees that $\rT_p$ is unbounded since it diverges along, e.g., the sequence of powers of $p$. The \textbf{settling time} and the \textbf{attractor} of $x$ under $\bP_p$ are the positive integers
$$\rS_p(x):=\min\bigl\{n\in\mathbb{N}:x_n\in\{1,\ldots,p-1\}\cup\bigl\{p^k:k\in\mathbb{N}\bigr\}\bigr\}\qquad\text{and}\qquad \rA_p(x):=x_{\rS_p(x)},$$
respectively.

Our aim is to derive explicit formulae for $\rT_p(x)$, $\rL_p(x)$, $\rS_p(x)$, and $\rA_p(x)$ in terms of $p$ and $x$. This will be achieved using some number-theoretic tools; see \cite{HardyWright,Koshy} for background. First, the \textbf{$p$-adic value} \cite[page 562]{HardyWright} of $x$ is the non-negative integer
$$\nu_p(x):=\max\left\{m\in\mathbb{N}_0:p^m\mid x\right\},$$
i.e., the exponent of the largest power of $p$ which divides $x$. Moreover, we denote by $x\Mod p$ the remainder when $x$ is divided by $p$ \cite[page 71]{Koshy}, $\cS_p(x)$ the sum of the coefficients in the \textbf{base-$p$ representation} \cite[Section 2.2]{Koshy} of $x$, and
$$\cS_p^\ast(x):=\cS_p(x)-(x\Mod p).$$
The formulae are given by the following theorem.\bigskip

\begin{theorem}\label{thm:Fr}
Let $p\in\mathbb{P}$. For every $x\in\mathbb{N}$ we have
\begin{eqnarray*}
\rT_p(x)&=&
\begin{cases}
\cS_p^\ast(x)+\nu_p(x)&\text{if }p\mid x,\\
\cS_p^\ast(x)+1&\text{otherwise},
\end{cases}\\
\rL_p(x)&=&\begin{cases}
1&\text{if }p\mid x,\\
x\Mod p&\text{otherwise},
\end{cases}\\
\rS_p(x)&=&\begin{cases}
\cS_p^\ast(x)&\text{if }p\mid x,\\
\cS_p^\ast(x)+1&\text{otherwise},
\end{cases}\\
\rA_p(x)&=&\begin{cases}
p^{\nu_p(x)}&\text{if }p\mid x,\\
x\Mod p&\text{otherwise}.
\end{cases}
\end{eqnarray*}
\end{theorem}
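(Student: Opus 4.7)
The plan is to analyse how a single application of $\bP_p$ transforms the base-$p$ representation of its argument, and then iterate. Write $x=\sum_{i=0}^{d}a_ip^i$ with $a_d>0$ and $a_i\in\{0,\ldots,p-1\}$. Since $p^d\leqslant x<p^{d+1}$, we have $\bq_p(x)=p^d$. Thus, whenever $x$ is neither in $\{1,\ldots,p-1\}$ nor a power of $p$ (equivalently $\cS_p(x)\geqslant 2$), one step of $\bP_p$ replaces $a_d$ by $a_d-1$ and leaves every other digit unchanged, so that $\cS_p$ decreases by exactly $1$ and $\nu_p$ is preserved provided the output is still not a power of $p$. Iterating this ``digit-decrement'' rule, the orbit simply peels off one unit of the highest non-zero digit at each step until the representation collapses to a single non-zero digit (a power of $p$, entering the division regime) or to a constant in $\{1,\ldots,p-1\}$.

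I would then split into two cases. \textbf{Case 1:} $p\nmid x$. Here $a_0=x\Mod p\geqslant 1$ persists through every subtraction step, so the orbit never visits any $p^k$ with $k\geqslant 1$. After exactly $\cS_p(x)-a_0=\cS_p^\ast(x)$ subtractions, the representation is reduced to $a_0$, giving $x_{\cS_p^\ast(x)+1}=x\Mod p\in\{1,\ldots,p-1\}$, hence $\rS_p(x)=\rT_p(x)=\cS_p^\ast(x)+1$ and $\rA_p(x)=\rL_p(x)=x\Mod p$. \textbf{Case 2:} $p\mid x$, so $a_0=0$ and $\cS_p(x)=\cS_p^\ast(x)$. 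Letting $m:=\nu_p(x)$, the lower $m$ digits stay zero throughout the subtraction phase, while higher digits are decremented one at a time; after $\cS_p^\ast(x)-1$ steps, the representation has digit sum $1$ with its unique non-zero digit at position $m$, so $x_{\cS_p^\ast(x)}=p^m=p^{\nu_p(x)}$. This yields $\rS_p(x)=\cS_p^\ast(x)$ and $\rA_p(x)=p^{\nu_p(x)}$. From here $\bP_p$ divides by $p$ for $\nu_p(x)$ further steps, reaching $1$ at step $\cS_p^\ast(x)+\nu_p(x)$, giving $\rT_p(x)=\cS_p^\ast(x)+\nu_p(x)$ and $\rL_p(x)=1$.

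I expect the main subtlety to be the clean formulation of the ``digit-decrement with preservation of trailing zeros'' fact as a self-contained lemma (presumably the Lemma \ref{transmap} already announced in the section introduction), and then handling the transition from the subtraction phase to the division phase precisely at the step where $\cS_p$ first equals $1$. The induction itself is routine once one checks that subtracting $p^d$ from $a_dp^d+\sum_{i<d}a_ip^i$ never causes a borrow, so the coefficient vector evolves predictably; the edge cases $x\in\{1,\ldots,p-1\}$ (which give $\cS_p^\ast(x)=0$), $x$ already a power of $p$ (which give $\cS_p^\ast(x)=1$), and the single iteration required to hit the attractor when the subtraction phase is empty, are all absorbed by the formulas and only need to be verified against the definitions.
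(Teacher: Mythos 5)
Your proposal is essentially the same as the paper's argument: describe the single-step action of $\bP_p$ on base-$p$ digits as ``decrement the leading digit'' (or shift when the argument is a pure power of $p$, or fix when there is only one digit) — this is precisely the paper's Lemma \ref{transmap}, formalized there as a conjugate map $\overline{\bP}_p$ on the digit-sequence space $\Omega_p$ — and then split into the two cases $p\mid x$ (Case II in the paper) and $p\nmid x$ (Case I), counting the subtraction steps digit by digit and appending $\nu_p(x)$ division steps in the first case. One small correction to an inessential aside: your parenthetical ``(equivalently $\cS_p(x)\geqslant 2$)'' is wrong for $p\geqslant 3$, since $x\in\{2,\ldots,p-1\}$ has $\cS_p(x)=x\geqslant 2$ yet is a fixed point; the condition under which the subtraction branch applies is ``$x\geqslant p$ and $x$ is not a power of $p$,'' i.e.\ the base-$p$ representation has at least two digits and is not $(0,\ldots,0,1)$. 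This does not affect the rest of your argument, which handles the cases via $a_0$ and treats $x\in\{1,\ldots,p-1\}$ separately.
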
\bigskip

To prove this theorem, we first study the structure of the orbits of the map $\bP_p$. This will be done by looking at how the base-$p$ representation of $x$ changes under an application of $\bP_p$. More formally, we study the conjugate map $\overline{\bP}_p:=\Psi_p\circ \bP_p\circ {\Psi_p}^{-1}$, where $\Psi_p$ is the bijection associating to every positive integer the infinite sequence whose terms are the coefficients $a_0,\ldots,a_k\in\mathbb{Z}_p$, where $a_k\neq0$ and $k\in\mathbb{N}_0$, in its base-$p$ representation $\sum_{i=0}^k a_ip^i$ followed by infinitely many zeros\footnote{Denoted using the standard notation $\overline{0}$.} [e.g., $\Psi_2(6)=\left(0,1,1,\overline{0}\right)$], as a self-map on the space $\Omega_p$ of all such sequences (see Figure \ref{fig:conjugation}). The action of the map $\overline{\bP}_p$ is described by the following lemma.\bigskip

%%%%%%%%%%%%%%%%%%%%%%%%%%%%%%%%%%%%%%%%%%%%%%%%%%%%%%%% FIGURE
\begin{figure}
\centering
\begin{tikzpicture}[scale=1.25]
\node at (-1,1) {$\mathbb{N}$};
\node at (1,1) {$\mathbb{N}$};
\node at (-1,-1) {$\Omega_p$};
\node at (1,-1) {$\Omega_p$};

\draw[->] (-0.6,1) -- (0.6,1);
\node[above] at (0,1) {$\bP_p$};
\draw[->] (-0.6,-1) -- (0.6,-1);
\node[below] at (0,-1) {$\overline{\bP}_p$};

\draw[->] (1,0.6) -- (1,-0.6);
\node[left] at (-1,0) {${\Psi_p}^{-1}$};
\draw[->] (-1,-0.6) -- (-1,0.6);
\node[right] at (1,0) {$\Psi_p$};
\end{tikzpicture}
\caption{\label{fig:conjugation}\small
The commutative diagram representing the conjugation $\overline{\bP}_p=\Psi_p\circ \bP_p\circ {\Psi_p}^{-1}$.}
\end{figure}
%%%%%%%%%%%%%%%%%%%%%%%%%%%%%%%%%%%%%%%%%%%%%%%%%%%%%%%%

\begin{lemma}\label{transmap}
The image of $\left(a_0,\ldots,a_k,\overline{0}\right)\in\Omega_p$, where $a_k\neq0$ and $k\in\mathbb{N}_0$, under $\overline{\bP}_p$ is given by
\begin{equation}\label{eq:transmap}
\overline{\bP}_p\left(a_0,\ldots,a_k,\overline{0}\right)=\begin{cases}
\left(a_0,\ldots,a_k,\overline{0}\right)&\text{if }k=0,\\
\left(a_1,\ldots,a_k,\overline{0}\right)&\text{if }k\geqslant 1\text{ and }a_0=\cdots=a_{k-1}=0\text{ and }a_k=1,\\
\left(a_0,\ldots,a_k-1,\overline{0}\right)&\text{otherwise}. %tujuan semua barisan diakhiri infinitely many zeros adalah spy kalo a_k=1 ini tetap berlaku; lihat contoh utk x=108 di bawah lemma ini
\end{cases}
\end{equation}
\end{lemma}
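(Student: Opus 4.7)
The plan is a direct case analysis that matches the three clauses of the definition \eqref{eq:Fr} of $\bP_p$ with the three clauses of \eqref{eq:transmap}. Fix $\left(a_0,\ldots,a_k,\overline{0}\right)\in\Omega_p$ with $a_k\neq0$ and set $x:=\Psi_p^{-1}\left(a_0,\ldots,a_k,\overline{0}\right)=\sum_{i=0}^k a_ip^i$; the goal is to verify that $\Psi_p\left(\bP_p(x)\right)$ matches the right-hand side of \eqref{eq:transmap}. The key preliminary observation, used in two of the cases, is that since $a_k\geqslant1$ and every digit is at most $p-1$, one has $p^k\leqslant x\leqslant (p-1)\sum_{i=0}^k p^i<p^{k+1}$, so the largest power of $p$ not exceeding $x$ is $\bq_p(x)=p^k$.

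First, if $k=0$, then $x=a_0\in\{1,\ldots,p-1\}$, so by the first clause of \eqref{eq:Fr} we get $\bP_p(x)=x$, whose image under $\Psi_p$ is $\left(a_0,\overline{0}\right)$, matching the first clause of \eqref{eq:transmap}. Second, if $k\geqslant1$ with $a_0=\cdots=a_{k-1}=0$ and $a_k=1$, then $x=p^k$ is a power of $p$, and the third clause of \eqref{eq:Fr} gives $\bP_p(x)=p^{k-1}$; the base-$p$ representation of $p^{k-1}$ consists of a single digit $1$ in position $k-1$, so $\Psi_p\left(\bP_p(x)\right)=\left(\underbrace{0,\ldots,0}_{k-1},1,\overline{0}\right)=\left(a_1,\ldots,a_k,\overline{0}\right)$, as required.

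In the remaining "otherwise" case, we must have $k\geqslant1$ and $x$ not a power of $p$ (since the characterisation of $x$ being a power of $p^k$ within the range $\left[p^k,p^{k+1}\right)$ is precisely $a_0=\cdots=a_{k-1}=0$ and $a_k=1$). Hence the second clause of \eqref{eq:Fr} applies and, using the preliminary observation,
$$\bP_p(x)=x-p^k=\sum_{i=0}^{k-1}a_ip^i+\left(a_k-1\right)p^k.$$
Since $0\leqslant a_k-1\leqslant p-1$ and each other $a_i$ lies in $\{0,\ldots,p-1\}$, this is a valid base-$p$ expansion, and applying $\Psi_p$ yields $\left(a_0,\ldots,a_{k-1},a_k-1,\overline{0}\right)$, which is the third clause of \eqref{eq:transmap}.

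The proof is essentially definition-unpacking, so there is no real obstacle; the only point requiring a moment of care is the third case when $a_k-1=0$, where the leading nonzero digit of $\bP_p(x)$ drops to a position below $k$. This is not a genuine issue because the sequence $\left(a_0,\ldots,a_{k-1},0,\overline{0}\right)$ is still the correct element of $\Omega_p$, with the extra zero simply absorbed into the tail $\overline{0}$.
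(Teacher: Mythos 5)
Your proof is correct and follows essentially the same route as the paper's: a three-way case analysis aligning the clauses of \eqref{eq:Fr} with those of \eqref{eq:transmap}, using in the non-power-of-$p$ case that $p^k\leqslant x<p^{k+1}$ forces $\bq_p(x)=p^k$. Your explicit remark on the edge case $a_k-1=0$ (the leading digit dropping below position $k$) is a sensible clarification not spelled out in the paper, and there is a harmless typo (\textquotedblleft power of $p^k$\textquotedblright\ should read \textquotedblleft power of $p$\textquotedblright), but the argument is sound throughout.
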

\begin{proof}
Let $x:={\Psi_p}^{-1}\left(a_0,\ldots,a_k,\overline{0}\right)\in\mathbb{N}$. This means that $\sum_{i=0}^k a_ip^i$ is the base-$p$ representation of $x$.
\begin{enumerate}
\item[i)] If $k=0$ then $1\leqslant x\leqslant p-1$, so $\bP_p(x)=x$. In other words, $\overline{\bP}_p\left(a_0,\ldots,a_k,\overline{0}\right)=\left(a_0,\ldots,a_k,\overline{0}\right)$.\medskip
\item[ii)] Suppose $k\geqslant1$, $a_0=a_1=\cdots=a_{k-1}=0$, and $a_k=1$. Then $x=p^k\geqslant p$, so $\bP_p(x)=\frac{x}{p}=p^{k-1}$. In other words, $\overline{\bP}_p\bigl(\underbrace{0,\ldots,0}_k,1,\overline{0}\bigr)=\bigl(\underbrace{0,\ldots,0}_{k-1},1,\overline{0}\bigr)$, i.e., $\overline{\bP}_p\left(a_0,\ldots,a_k,\overline{0}\right)=\left(a_1,\ldots,a_k,\overline{0}\right)$.
\item[iii)] Suppose $k\geqslant1$, and suppose at least one of the following is not true: $a_0=\cdots=a_{k-1}=0$ and $a_k=1$. Then, $x$ is not a power of $p$; for otherwise, since $x>p^k$, we must have that $x=p^\ell$ for some integer $\ell>k$, meaning that $p^\ell$ is the unique base-$p$ representation of $x$, in which the leading coefficient is $1$ and all the other coefficients are $0$, contradicting our supposition. Since $p^k$ is the largest power of $p$ not exceeding $x$, then $\bP_p(x)=x-p^k=\sum_{i=0}^{k-1} a_ip^i + \left(a_k-1\right)p^k$. In other words, $\overline{\bP}_p\left(a_0,\ldots,a_k,\overline{0}\right)=\left(a_0,\ldots,a_k-1,\overline{0}\right)$.
\end{enumerate}
\end{proof}\bigskip

For every $x\in\mathbb{N}$, Lemma \ref{transmap} gives a complete description of the orbit of $\Psi_p(x)$ under $\overline{\bP}_p$, and hence that of the orbit of $x$ under $\bP_p$. For instance, a complete description of the orbit of $108$ under $\bP_2$,
$$108\mapsto 44\mapsto 12\mapsto 4\mapsto 2\mapsto 1,$$
can be obtained from the fact that $\Psi_2(108)=\left(0,0,1,1,0,1,1,\overline{0}\right)$ and, under $\overline{\bP}_2$,
$$\left(0,0,1,1,0,1,1,\overline{0}\right)\mapsto \left(0,0,1,1,0,1,\overline{0}\right)\mapsto \left(0,0,1,1,\overline{0}\right)\mapsto \left(0,0,1,\overline{0}\right)\mapsto \left(0,1,\overline{0}\right)\mapsto \left(1,\overline{0}\right).$$

Now, let $\alpha=\left(a_k,\ldots,a_0\right)$. Based on the definitions of the analogues under $\bP_p$, the \textbf{transient length}, \textbf{limit}, \textbf{settling time}, and \textbf{attractor} of $\alpha$ under $\overline{\bP}_p$ are
\begin{eqnarray*}
\overline{\rT}_p(\alpha)&=&\min\bigl\{n\in\mathbb{N}:\text{only the first term of }\alpha_n\text{ is non-zero}\bigr\},\\
\overline{\rL}_p(\alpha)&=&\alpha_{\overline{\rT}_p(\alpha)},\\
\overline{\rS}_p(\alpha)&=&\min\bigl\{n\in\mathbb{N}:
\alpha_n\in\bigl\{ \bigl( \underbrace{0,0,\ldots,0}_t,1,\overline{0}\bigr):t\in\mathbb{N}_0\bigr\}\cup
\bigl\{\bigl(1,\overline{0}\bigr),\ldots,\bigl(p-1,\overline{0}\bigr)\bigr\}\bigr\},\\
\overline{\rA}_p(\alpha)&=&\alpha_{\overline{\rS}_p(\alpha)},
\end{eqnarray*}
respectively, where $\left(\alpha_n\right)_{n=1}^\infty$ is the orbit of $\alpha$ under $\overline{\bP}_p$, which is determined by
$$\alpha_1=\alpha\qquad\quad\text{and}\quad\qquad\alpha_{n+1}=\overline{\bP}_p\left(\alpha_n\right)\quad\text{for every }n\in\mathbb{N}.$$\smallskip

\paragraph{\textit{Proof of Theorem \ref{thm:Fr}}} Proving Theorem \ref{thm:Fr} is equivalent to proving that
\begin{eqnarray*}
\overline{\rT}_p(\alpha)&=&
\begin{cases}
\nu_p\left({\Psi_p}^{-1}(\alpha)\right)+a_k+\cdots+a_1&\text{if }a_0=0,\\
1+a_k+\cdots+a_1&\text{if }a_0\geqslant 1,
\end{cases}\\
\overline{\rL}_p(\alpha)&=&\begin{cases}
\left(1,\overline{0}\right)&\text{if }a_0=0,\\
\left(a_0,\overline{0}\right)&\text{if }a_0\geqslant 1,
\end{cases}\\
\overline{\rS}_p(\alpha)&=&\begin{cases}
a_k+\cdots+a_1&\text{if }a_0=0,\\
1+a_k+\cdots+a_1&\text{if }a_0\geqslant 1,
\end{cases}\\
\overline{\rA}_p(\alpha)&=&\begin{cases}
\bigl( \underbrace{0,0,\ldots,0}_{\nu_p\left({\Psi_p}^{-1}(\alpha)\right)},1,\overline{0}\bigr)&\text{if }a_0=0,\\
\left(a_0,\overline{0}\right)&\text{if }a_0\geqslant 1.
\end{cases}
\end{eqnarray*}
We consider each of the two cases.\bigskip

\noindent\underline{\textsc{Case I}}: $a_0\geqslant 1$. If $k=0$, then the orbit of $\alpha=\left(a_0,\overline{0}\right)$ is constant, so $\overline{\rL}_p(\alpha)=\overline{\rA}_p(\alpha)=\left(a_0,\overline{0}\right)$ and $\overline{\rT}_p(\alpha)=\overline{\rS}_p(\alpha)=1$, obeying the desired formulae. Now suppose $k\geqslant 1$. Since $a_0\geqslant 1$, then each iteration is prescribed by the last branch in \eqref{eq:transmap}. Since
$$\alpha_1=\left(a_0,\ldots,a_k,\overline{0}\right),$$
then
$$\alpha_{1+a_k}={\overline{\bP}_p}^{a_k}\left(\alpha_1\right)=\left(a_0,\ldots,a_{k-1},\overline{0}\right).$$
Continuing this, we have, for every $j\in\{k-1,\ldots,1\}$,
$$\alpha_{1+a_k+\cdots+a_j}={\overline{\bP}_p}^{a_{j}}\left(\alpha_{1+a_k+\cdots+a_{j+1}}\right)=\left(a_0,\ldots,a_{j-1},\overline{0}\right),$$
which means that
$$\overline{\rT}_p(\alpha)=\overline{\rS}_p(\alpha)=1+a_k+\cdots+a_1\qquad\text{and}\qquad\overline{\rL}_p(\alpha)=\overline{\rA}_p(\alpha)=\left(a_0,\overline{0}\right),$$
as desired.\bigskip

\noindent\underline{\textsc{Case II}}: $a_0=0$. In this case, $k\geqslant 1$. Let $t:=\nu_p\left({\Psi_p}^{-1}(\alpha)\right)$, then $a_0=\cdots=a_{t-1}=0$ and $a_t\geqslant 1$, so
$$\alpha_1=\bigl( \underbrace{0,\ldots,0}_t,a_t,\ldots,a_k,\overline{0}\bigr).$$
By the last branch in \eqref{eq:transmap}, we have
$$\alpha_{1+a_k}={\overline{\bP}_p}^{a_k}\left(\alpha_1\right)=\bigl( \underbrace{0,\ldots,0}_t,a_t,\ldots,a_{k-1},\overline{0}\bigr),$$
and, for every $j\in\{k-1,\ldots,t+1\}$,
$$\alpha_{1+a_k+\cdots+a_{j}}={\overline{\bP}_p}^{a_{j}}\left(\alpha_{1+a_k+\cdots+a_{j+1}}\right)=\bigl( \underbrace{0,\ldots,0}_{t},a_t,\ldots,a_{j-1},\overline{0}\bigr).$$
Next,
$$\alpha_{a_k+\cdots+a_t}={\overline{\bP}_p}^{a_{t}-1}\left(\alpha_{1+a_k+\cdots+a_{t+1}}\right)=\bigl( \underbrace{0,\ldots,0}_t,1,\overline{0}\bigr).$$
At this point we have proved that
$$\overline{\rS}_p(\alpha)=a_k+\cdots+a_1\qquad\text{and}\qquad\overline{\rA}_p(\alpha)=\bigl( \underbrace{0,\ldots,0}_t,1,\overline{0}\bigr)=\bigl( \underbrace{0,0,\ldots,0}_{\nu_p\left({\Psi_p}^{-1}(\alpha)\right)},1,\overline{0}\bigr).$$
Finally, the dynamics concludes with $t$ iterations prescribed by the second branch in \eqref{eq:transmap}:
$$\alpha_{a_k+\cdots+a_t+t}={\overline{\bP}_p}^t\left(\alpha_{a_k+\cdots+a_t}\right)=\left(1,\overline{0}\right).$$
Therefore,
$$\overline{\rT}_p(\alpha)=t+a_k+\cdots+a_1=\nu_p\left({\Psi_k}^{-1}(\alpha)\right)+a_k+\cdots+a_1\qquad\text{and}\qquad\overline{\rL}_p(\alpha)=\left(1,\overline{0}\right),$$
as desired.\hfill$\square$\bigskip

\subsection{A comparison of $\bP_p$ and $\bP$}

Let $p\in\mathbb{P}$. For every $x\in\mathbb{N}$, let
$$\hat{\rT}_p(x):=\frac{1}{x}\sum_{t=1}^x \rT_p(t)\qquad\quad\text{and}\quad\qquad \hat{\rS}_p(x):=\frac{1}{x}\sum_{t=1}^x \rS_p(t)$$
be the $x$-th Ces\'aro mean of the sequences $\left(\rT_p(x)\right)_{x=1}^\infty$ and $\left(\rS_p(x)\right)_{x=1}^\infty$, respectively. From Theorem \ref{thm:Fr}, noticing that
$$\frac{\bigl|\{t\leqslant x:p\mid t\}\bigr|}{x}=\frac{1}{x}\left\lfloor\frac{x}{p}\right\rfloor\sim\frac{1}{p}$$
and
$$\frac{1}{x}\sum_{t=1}^x \nu_p(t)=\frac{1}{x}\sum_{t=1}^{\left\lfloor\log_p x\right\rfloor}\left\lfloor\frac{x}{p^t}\right\rfloor\sim\sum_{t=1}^{\left\lfloor\log_p x\right\rfloor}\frac{1}{p^t}\sim\frac{1}{p-1}$$
and applying \cite[Theorem 1]{Bush},
$$\frac{1}{x}\sum_{t=1}^x\cS_p(t)\sim \frac{(p-1)\ln x}{2\ln p},$$
one proves the following corollary.\bigskip

\begin{corollary}\label{corollary}
We have
$$\hat{\rT}_p(x)\sim\hat{\rS}_p(x)\sim \frac{(p-1)\ln x}{2\ln p}.$$
\end{corollary}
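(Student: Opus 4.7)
The plan is to reduce both Cesàro means to $\tfrac{1}{x}\sum_{t=1}^{x}\cS_p(t)$ plus lower-order correction terms, and then invoke the Bush-type estimate $\tfrac{1}{x}\sum_{t=1}^{x}\cS_p(t)\sim\tfrac{(p-1)\ln x}{2\ln p}$ quoted from \cite{Bush}. The leading term will come entirely from $\cS_p$; everything else will turn out to be $O(1)$ and hence $o(\ln x)$.

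First I would rewrite $\rT_p$ and $\rS_p$ using $\cS_p^{\ast}(t)=\cS_p(t)-(t\Mod p)$, splitting each sum over $t\in\{1,\ldots,x\}$ according to whether $p\mid t$. This gives
$$\sum_{t=1}^{x}\rS_p(t)=\sum_{t=1}^{x}\cS_p^{\ast}(t)+\bigl|\{t\leqslant x:p\nmid t\}\bigr|\qquad\text{and}\qquad\sum_{t=1}^{x}\rT_p(t)=\sum_{t=1}^{x}\rS_p(t)+\sum_{\substack{t\leqslant x\\ p\mid t}}\nu_p(t),$$
(since the $p\mid t$ branch adds $\nu_p$ while the $p\nmid t$ branch adds $1$, matching the $\rS_p$-case exactly). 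Then I would substitute $\cS_p^{\ast}(t)=\cS_p(t)-(t\Mod p)$ to isolate the $\cS_p$-sum from bounded-in-average residues.

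Next I would check that each correction is negligible compared with $\ln x$. The indicator count $\tfrac{1}{x}|\{t\leqslant x:p\nmid t\}|$ is bounded by $1$; the residues satisfy $\tfrac{1}{x}\sum_{t=1}^{x}(t\Mod p)\to \tfrac{p-1}{2}$ (or at any rate stay bounded by $p-1$); and the $\nu_p$-average is controlled by the second displayed asymptotic in the statement, namely $\tfrac{1}{x}\sum_{t=1}^{x}\nu_p(t)\sim\tfrac{1}{p-1}$, which certainly dominates $\tfrac{1}{x}\sum_{p\mid t,\,t\leqslant x}\nu_p(t)$. All of these contributions are $O(1)=o(\ln x)$, so they can be absorbed into the error.

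Combining, I get
$$\hat{\rT}_p(x)=\frac{1}{x}\sum_{t=1}^{x}\cS_p(t)+O(1)\qquad\text{and}\qquad \hat{\rS}_p(x)=\frac{1}{x}\sum_{t=1}^{x}\cS_p(t)+O(1),$$
and the cited Bush estimate $\tfrac{1}{x}\sum_{t=1}^{x}\cS_p(t)\sim\tfrac{(p-1)\ln x}{2\ln p}$ finishes the proof, the $O(1)$ terms being asymptotically swallowed by the diverging $\ln x$. There is no real obstacle here: the only thing to be careful about is the bookkeeping in the split by divisibility by $p$, and the verification that restricting the $\nu_p$-sum to multiples of $p$ does not change its order of magnitude (it does not, since $\nu_p(t)=0$ whenever $p\nmid t$, so the restricted sum equals the full sum).
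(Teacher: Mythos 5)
Your proposal is correct and follows essentially the same route as the paper: the paper's proof consists precisely of applying Theorem~\ref{thm:Fr} to decompose the Ces\`aro means, then invoking the three stated asymptotics for $\frac{1}{x}\lfloor x/p\rfloor$, $\frac{1}{x}\sum_t\nu_p(t)$, and (via Bush) $\frac{1}{x}\sum_t\cS_p(t)$, with the first two contributing only $O(1)$. Your bookkeeping of the split by $p\mid t$, the observation that the $\nu_p$-sum over multiples of $p$ equals the full $\nu_p$-sum, and the absorption of all bounded terms into $o(\ln x)$ is exactly what the paper intends, spelled out in slightly more detail than the paper's sketch.
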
\bigskip

%%%%%%%%%%%%%%%%%%%%%%%%%%%%%%%%%%%%%%%%%%%%%%%%%%%%%%%% FIGURE
\begin{figure}
\centering
\input{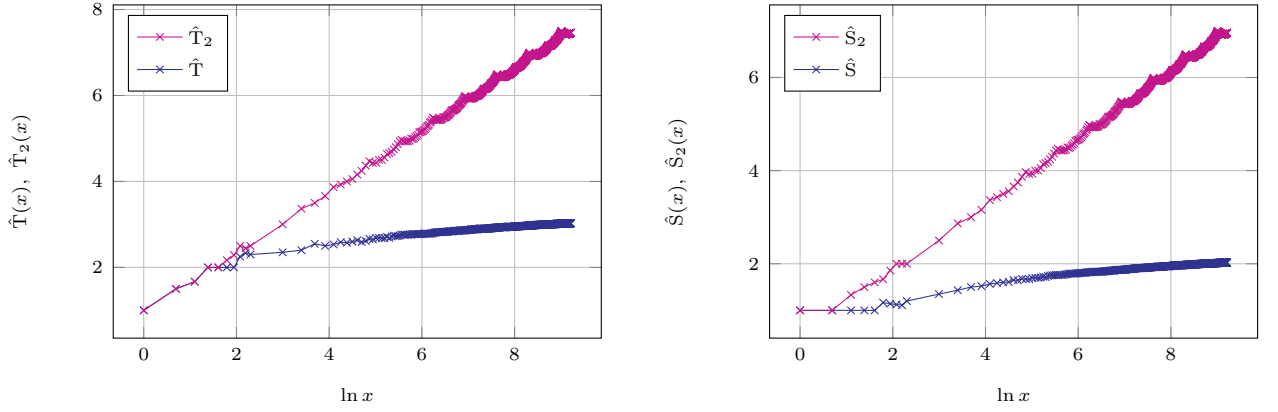}
\caption{\label{fig:TSasymptotics}\small
Plots of $\hat{\rT}(x)$ and $\hat{\rT}_2(x)$ versus $\ln x$ (left) and $\hat{\rS}(x)$ and $\hat{\rS}_2(x)$ versus $\ln x$ (right) for $x\in\left\{1,\ldots,1\cdot 10^4\right\}$.}
\end{figure}
%%%%%%%%%%%%%%%%%%%%%%%%%%%%%%%%%%%%%%%%%%%%%%%%%%%%%%%%

Thus, the growths of $\hat{\rT}_p(x)$ and $\hat{\rS}_p(x)$ with $x$ are both logarithmic. By contrast, letting
$$\hat{\rT}(x):=\frac{1}{x}\sum_{t=1}^x \rT(t)\qquad\quad\text{and}\quad\qquad \hat{\rS}(x):=\frac{1}{x}\sum_{t=1}^x \rS(t),$$
from Theorems \ref{thm:power2} and \ref{thm:orderS(x)} we know that the former grows at most logarithmically, and the latter at most double-logarithmically. In fact, Figure \ref{fig:TSasymptotics} suggests that both quantities grow at most double-logarithmically. This is confirmed by Figure \ref{fig:TSasymptotics2} which suggests the existence of $\lambda\geqslant 1$ such that
\begin{equation}\label{eq:TsimS}
\frac{\hat{\rT}(x)}{\hat{\rS}(x)}\sim\lambda.
\end{equation}

%%%%%%%%%%%%%%%%%%%%%%%%%%%%%%%%%%%%%%%%%%%%%%%%%%%%%%%% FIGURE
\begin{figure}
\centering
\input{fig-TSasymptotics2}
\caption{\label{fig:TSasymptotics2}\small
Plot of $\hat{\rT}(x)$ versus $\hat{\rS}(x)$ for $x\in\left\{1,\ldots,1\cdot 10^5\right\}$.}
\end{figure}
%%%%%%%%%%%%%%%%%%%%%%%%%%%%%%%%%%%%%%%%%%%%%%%%%%%%%%%%

Now, since
\begin{eqnarray*}
\left(\hat{\rT}-\hat{\rS}\right)(x)&=&\frac{1}{x}\sum_{t=1}^x(\rT-\rS)(t)\\
&=&\frac{1}{x}\sum_{u=0}^{\max\limits_{1\leqslant t\leqslant x}(\rT-\rS)(t)}u\,\left|(\rT-\rS)^{-1}(\{u\})\cap\{1,\ldots,x\}\right|\\
&=&\sum_{u=1}^{\max\limits_{1\leqslant t\leqslant x}(\rT-\rS)(t)} u\,\bd_x(\rT-\rS)^{-1}(\{u\}),
\end{eqnarray*}
then we have $$\left(\hat{\rT}-\hat{\rS}\right)(x)\xrightarrow{x\to\infty}0\qquad\text{if}\qquad \bd_x(\rT-\rS)^{-1}(\{u\})\xrightarrow{x\to\infty}0\quad\text{for every }u\in\mathbb{N}.$$
The latter holds if and only if the set $(\rT-\rS)^{-1}\left(\mathbb{N}\right)=\rA^{-1}\left(\mathbb{N}_{\geqslant 2}\right)$ is sparse. Indeed, an extensive set of values of $\bd_x\rA^{-1}\left(\mathbb{N}_{\geqslant 2}\right)$ suggests that this quantity decays algebraically to zero as $x\to\infty$ (Figure \ref{fig:melaluiprimepower}), leading to the following conjecture.\bigskip

%%%%%%%%%%%%%%%%%%%%%%%%%%%%%%%%%%%%%%%%%%%%%%%%%%%%%%%% FIGURE
\begin{figure}
\centering
\input{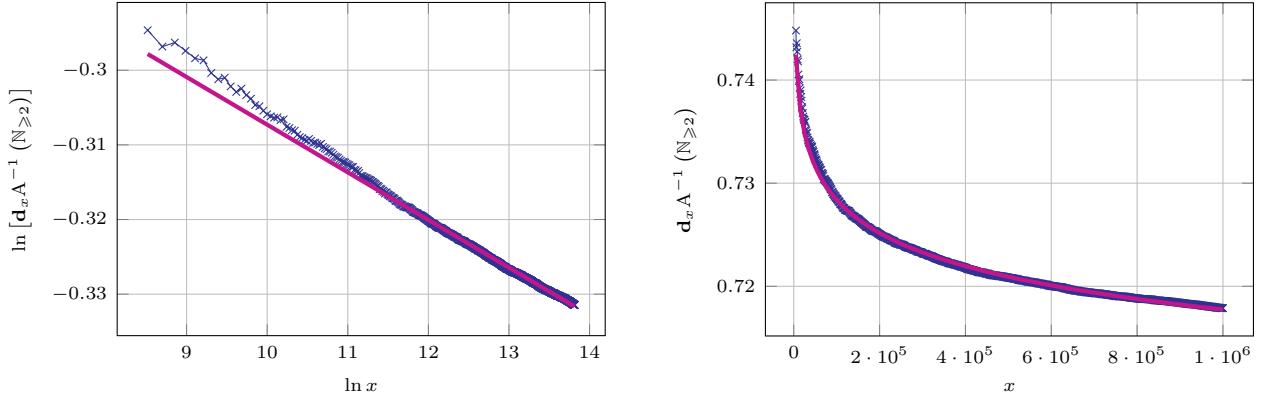}
\caption{\label{fig:melaluiprimepower}\small
Plot of $\bd_x\rA^{-1}\left(\mathbb{N}_{\geqslant 2}\right)$ versus $x$ for $x\in\{5\cdot10^4,\ldots,1\cdot 10^6\}$ in log-log scale (left) together with the least-squares line determined using its values for $x\in\{6\cdot 10^5,\ldots,1\cdot 10^6\}$ which has slope $\alpha\approx -0.006$ and ordinate intercept $\beta\approx -0.243$, and in linear-linear scale (right) together with the curve $e^{\beta}x^\alpha$.}
\end{figure}
%%%%%%%%%%%%%%%%%%%%%%%%%%%%%%%%%%%%%%%%%%%%%%%%%%%%%%%%

\begin{samepage}
\begin{conjecture}\label{finalconjecture}
There exists a function $\Phi(x)=cx^\alpha$, where $c\in(0,1)$ and $\alpha\in\left(-10^{-2},-10^{-3}\right)$, such that
$$\left|\bd_x\rA^{-1}\left(\mathbb{N}_{\geqslant2}\right) - \Phi(x)\right|=o\left(\Phi(x)\right).$$
In particular, almost every orbit of $\bP$ contains no prime-power.
\end{conjecture}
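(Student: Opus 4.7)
The plan is to extract the asymptotic behaviour of $D(x) := \bd_x \rA^{-1}(\mathbb{N}_{\geqslant 2})$ from a backward-iteration recursion governed by consecutive prime-power gaps, and then to match the resulting self-consistency equation against the claimed power-law ansatz.

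First I would set up the recursion. Enumerating the prime-powers as $1=q_0<q_1<q_2<\cdots$ with gaps $g_n := q_{n+1}-q_n$, every non-prime-power $x \in (q_n, q_{n+1})$ satisfies $\bP(x) = x-q_n$, and hence belongs to $\rA^{-1}(\mathbb{N}_{\geqslant 2})$ precisely when $x - q_n$ does; moreover $\mathbb{P}^\ast \subseteq \rA^{-1}(\mathbb{N}_{\geqslant 2})$ trivially. Writing $C(y) := \left|\rA^{-1}(\mathbb{N}_{\geqslant 2}) \cap [1,y]\right|$ and $N(x)$ for the largest $n$ with $q_n \leqslant x$, this yields
\[
xD(x) \;=\; \Pi(x) \;+\; \sum_{n:\,q_{n+1} \leqslant x} C(g_n - 1) \;+\; C\bigl(x - q_{N(x)}\bigr),
\]
a genuine self-reference: the right-hand side involves $D$ only at scales $g_n - 1 \ll x$ and at the tail argument $x - q_{N(x)} \leqslant g_{N(x)}$.

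Next, I would insert the ansatz $D(y) \sim c y^\alpha$, equivalently $C(y) \sim \tfrac{c}{\alpha+1}y^{\alpha+1}$, and evaluate asymptotically. Using $\Pi(x) \sim x/\ln x$, the exhaustion identity $\sum_{n:\,q_{n+1}\leqslant x}(g_n-1) = q_{N(x)} - 1 - N(x)$, and a Cram\'er-type moment estimate of the form $\sum_{q_n \leqslant x} g_n^{\alpha+1} \sim \kappa_\alpha\, x (\ln x)^\alpha$ (with $\alpha+1 \in (0,1)$), the recursion reduces to a transcendental equation coupling $c$, $\alpha$, and standard analytic constants. Its solution in the empirical range $x \in [10^5,10^6]$ should reproduce the slope observed in Figure \ref{fig:melaluiprimepower} and hence confirm $c \in (0,1)$ and $\alpha \in (-10^{-2},-10^{-3})$; the \emph{in particular} statement then follows immediately from $\alpha<0$, since this forces $D(x) \to 0$.

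The decisive obstacle is the \emph{a priori} justification of the power-law ansatz itself. The recursion above is equally compatible with a purely logarithmic decay $D(y) \sim c(\ln y)^{-\beta}$, or with oscillatory behaviour inherited from irregularities in prime-power gaps; indeed the empirical slope of $\approx -0.006$ is numerically indistinguishable, over the observed window, from the $1/\ln x$-type decay contributed by the first term $\Pi(x)/x$ alone. Ruling out these alternatives appears to require two inputs not currently available unconditionally: (i) a Cram\'er-type distributional statement for prime-power gaps controlling $\sum_{q_n \leqslant x} g_n^{s}$ uniformly for real $s$ in a neighbourhood of $1$, and (ii) an equidistribution theorem for iterated images $\bP^k(x)$ over residue classes modulo small primes, extending Theorem \ref{thm:coprime} from $k=1$ to all $k \in \mathbb{N}$. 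Accordingly I would first pursue a conditional version of the conjecture under these two hypotheses, and treat the full unconditional statement as a longer-term target.
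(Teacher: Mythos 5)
The statement you are addressing is a \emph{conjecture}: the paper offers no proof of it, only the numerical evidence of Figure \ref{fig:melaluiprimepower} (a least-squares fit giving $\alpha\approx-0.006$, $c=e^{\beta}\approx 0.78$) together with the remark, made just before the conjecture, that sparseness of $\rA^{-1}\left(\mathbb{N}_{\geqslant 2}\right)$ is what the ``in particular'' clause means and that it would imply \eqref{eq:TsimS} with $\lambda=1$. So there is no proof in the paper to compare yours against, and your proposal should be judged as a programme rather than matched to an argument. On its own terms it is not a proof, and you say so yourself: the power-law ansatz is assumed, not derived; the Cram\'er-type moment estimate $\sum_{q_n\leqslant x} g_n^{\alpha+1}\sim\kappa_\alpha\,x(\ln x)^{\alpha}$ for prime-power gaps is far beyond what is known unconditionally (even the much weaker Proposition \ref{prop:Bertrandrefinement} rests on \cite{Baker}); and the equidistribution input (ii) is likewise unavailable. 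With those hypotheses unproved, the self-consistency calculation can at best select parameters \emph{within} an assumed model, so the conjecture would remain exactly that.

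That said, the structural part of your proposal is correct and is genuinely more than the paper does. The renewal-type identity
\[
x\,\bd_x\rA^{-1}\left(\mathbb{N}_{\geqslant 2}\right)\;=\;\Pi(x)\;+\;\sum_{n:\,q_{n+1}\leqslant x} C\left(g_n-1\right)\;+\;C\left(x-q_{N(x)}\right)
\]
is an exact consequence of the facts that $\rA(x)=\rA\left(\bP(x)\right)$ for $x\notin\mathbb{P}^\ast_1$ and that $\bP(x)=x-q_n$ on the interior of the gap $\left(q_n,q_{n+1}\right)$, and the exhaustion identity $\sum_{n:\,q_{n+1}\leqslant x}\left(g_n-1\right)=q_{N(x)}-1-N(x)$ is right. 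Your caveat that the data window $\left[10^5,10^6\right]$ cannot distinguish a tiny power $x^{\alpha}$ from a slowly varying logarithmic decay is a fair warning aimed at the conjecture itself, not a flaw in your argument; it is precisely the kind of scepticism the paper's purely empirical support invites. In short: no gap relative to the paper (which proves nothing here), but your proposal establishes the statement only conditionally, as you candidly acknowledge, and the decisive analytic inputs are out of reach at present.
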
\bigskip
\end{samepage}

\noindent Notice that if this conjecture holds, then so does \eqref{eq:TsimS} with $\lambda=1$.\bigskip

%===============================================================================================================================
%===============================================================================================================================
%===============================================================================================================================
%===============================================================================================================================4
%===============================================================================================================================

%%%%%%%%%%%%%%%%%%%%%%%%%%%%%%%%%%%%%%%%%%%%%%%%%%%%%%%%%%%%%%%%%%%%%%%%%%%

\end{document}